\newcommand{\eqdef}{\stackrel{\mathrm{def}}{=}}
\renewcommand{\le}{\leqslant}
\renewcommand{\ge}{\geqslant}
\renewcommand{\setminus}{\smallsetminus}
  \DeclareMathOperator{\diam}{diam}
\newcommand{\N}{\mathbb{N}}
\newcommand{\E}{\mathbb{E}}
\newcommand{\U}{\mathscr{U}}
\renewcommand{\P}{\mathscr P}
\newcommand{\e}{\varepsilon}
\renewcommand{\S}{\mathcal{S}}
\theoremstyle{plain}
  \newtheorem{lemma}{Lemma}[section]
  \newtheorem{theorem}[lemma]{Theorem}
  \newtheorem{corollary}[lemma]{Corollary}
  \theoremstyle{definition}
  \newtheorem{remark}[lemma]{Remark}
\begin{document}

\title{Ultrametric skeletons}
\thanks{M. M. was partially supported by ISF grants 221/07 and  93/11,
BSF grants 2006009 and 2010021, and a gift from Cisco Research Center. A. N.
was partially supported by NSF grant CCF-0832795, BSF grants
2006009 and 2010021, and the Packard Foundation. Part of this work was completed when M. M. was a visiting researcher at Microsoft Research and A. N. was visiting the Quantitative Geometry program at the Mathematical Sciences Research Institute.}
\author{Manor Mendel}
\address{Mathematics and Computer Science Department, Open University of Israel, 1 University Road, P.O. Box 808
Raanana 43107, Israel} \email{mendelma@gmail.com}

\author{Assaf Naor}
\address{Courant Institute, New York University, 251 Mercer Street, New York NY 10012, USA}
\email{naor@cims.nyu.edu}

\begin{abstract} We prove that for every $\e\in (0,1)$ there exists $C_\e\in (0,\infty)$ with the following property. If $(X,d)$ is a compact metric space and $\mu$ is a Borel probability measure on $X$ then there exists a compact subset $S\subseteq X$ that embeds into an ultrametric space with distortion $O(1/\e)$, and a probability measure $\nu$ supported on $S$ satisfying $\nu\left(B_d(x,r)\right)\le \left(\mu(B_d(x,C_\e r)\right)^{1-\e}$ for all $x\in X$ and $r\in (0,\infty)$. The dependence of the distortion on $\e$ is sharp. We discuss an extension of this statement to multiple measures, as well as how it implies Talagrand's majorizing measures theorem.
\end{abstract}

\maketitle


\section{Introduction}

Our main result is the following theorem.
\begin{theorem}\label{thm:our measure}
For every $\e\in (0,1)$ there exists $C_\e\in (0,\infty)$ with the following property.  Let $(X,d)$ be a compact metric space and let $\mu$ be a Borel probability measure on $X$. Then there exists a compact subset $S\subseteq X$  satisfying
\begin{enumerate}
\item $S$ embeds into an utrametric space with distortion $O(1/\e)$.
\item There exists a Borel probability measure supported on $S$ satisfying
\begin{equation}\label{eq:measure growth}
\nu\left(B_d(x,r)\right)\le \left(\mu(B_d(x,C_\e r)\right)^{1-\e}
\end{equation}
for all $x\in X$ and $r\in [0,\infty)$.
\end{enumerate}
\end{theorem}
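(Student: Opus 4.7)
The plan is to build $S$ as the leaf set of an infinite hierarchically well-separated tree (HST) of parameter $\rho\asymp 1/\e$---so that the induced ultrametric automatically witnesses distortion $O(1/\e)$---and to construct $\nu$ in tandem by recursively distributing probability mass down this tree with weights tied to powers of $\mu$. First I would reduce, by a standard compactness/subsequential extraction argument applied to finite approximations of $(X,\mu)$, to the case where $X$ is finite, since the measure-growth inequality \eqref{eq:measure growth} is preserved under appropriate weak limits of finitely supported measures.

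The heart of the argument will be a recursive \emph{skeleton lemma}: given any $Y\subseteq X$ of diameter at most $r$ together with $\mu|_Y$, produce a skeleton $S_Y\subseteq Y$ and a probability measure $\nu_Y$ on $S_Y$ satisfying $\nu_Y(B_d(x,s))\le (\mu(B_d(x,C_\e s)\cap Y)/\mu(Y))^{1-\e}$, and apply it at the root $Y=X$. To prove it I would partition $Y$ into clusters $\{Y_i\}$ of diameter at most $r/\rho$ via a measure-weighted Calinescu--Karloff--Rabani-style random partition, and then extract, by averaging over the random choice, a deterministic representative for which a large fraction of the $\mu$-mass of $Y$ is \emph{padded}: $B_d(\cdot,r/\rho^2)$ about each such point lies inside a single cluster $Y_i$. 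Recursing on each cluster to obtain $(S_i,\nu_i)$, I would take $S_Y:=\bigcup_i S_i$ and define $\nu_Y:=\sum_i\alpha_i\nu_i$ with $\alpha_i$ proportional to $\mu(Y_i)^{1-\e}$.

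The inequality that propagates the measure-growth bound is the subadditivity of $t\mapsto t^{1-\e}$ on $[0,\infty)$: for disjoint $Y_i\subseteq Y$ one has $\sum_i\mu(Y_i)^{1-\e}\ge \mu(Y)^{1-\e}$. Combined with $r/\rho^2$-padding this yields, for $B_d(x,s)$ with $s\lesssim r/\rho^2$ meeting a unique cluster $Y_{i(x)}$, an estimate $\nu_Y(B_d(x,s))\le \nu_{Y_{i(x)}}(B_d(x,s))\cdot \mu(Y_{i(x)})^{1-\e}/\mu(Y)^{1-\e}$. Telescoped inductively through the HST levels, this delivers \eqref{eq:measure growth} at every scale.

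The principal obstacle I anticipate is reconciling the partition's padding probability with the measure-growth target \emph{at every scale simultaneously}: the familiar CKR-type partitions cut a radius-$t$ ball with probability roughly $(t/r)\log(\mu_{\mathrm{outer}}/\mu_{\mathrm{inner}})$, and the logarithmic dependence on the local density ratio is not compatible with the power-law target $t^{1-\e}$. Overcoming this should require calibrating $\rho\asymp 1/\e$ so that $\rho^{-\e}$ is bounded away from $1$, together with a controlled discarding of $\mu$-mass at each scale that is paid for by the slack in the $(1-\e)$-exponent coming from subadditivity. A final diagonal/compactness step then lifts the finite construction back to a compact $S\subseteq X$, and one must verify that the resulting ultrametric really has distortion $O(1/\e)$ at all scales at once, not merely on each finite level.
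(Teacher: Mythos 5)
Your proposal takes a fundamentally different route from the paper's and, as it stands, has a genuine gap. The paper does not reprove the existence of the ultrametric subset from scratch. It invokes Theorem~\ref{thm:sub meas} (the covering-estimate form proved in~\cite{MN11-ultra}) as a black box, defines from the covering estimate a probability \emph{submeasure} $\xi$ on $S$ via
$$\xi(A)=\inf\Bigl\{\sum_i\mu\bigl(B_d(x_i,c_\e r_i)\bigr)^{1-\e}:\ \bigcup_i B_d(x_i,r_i)\supseteq A\Bigr\},$$
and then establishes a new Lemma~\ref{lem:sub-nosub}: any probability submeasure on a compact ultrametric space dominates, \emph{on all balls}, a genuine Borel probability measure. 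The measure $\nu$ is then produced by Carath\'eodory extension from a recursive allocation of mass along the canonical ball-partition tree of $(S,\rho)$. This separation---construct $S$ first, then conjure $\nu$ from the covering estimate via a purely ultrametric submeasure-domination lemma---is the entire point of the paper, and it sidesteps exactly the difficulty you struggle with.

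The gap in your plan is the one you anticipate in your last paragraph, and your proposed remedy does not close it. CKR-style random partitions pad a ball of radius $t$ with probability roughly $1-O\bigl((t/r)\log(\mu_{\mathrm{out}}/\mu_{\mathrm{in}})\bigr)$; that logarithmic factor is precisely why a single top-down padded-partition pass yields distortion $\Theta(\e^{-1}\log(1/\e))$, not the sharp $O(1/\e)$ required here (this is the gap between~\cite{BLMN05} and~\cite{MN07}). ``Calibrating $\rho\asymp 1/\e$'' and ``controlled discarding paid for by the slack in the $(1-\e)$-exponent'' is not an argument; the slack provided by subadditivity of $t\mapsto t^{1-\e}$ is multiplicative and uniform across clusters, whereas the CKR loss is scale- and density-dependent, and the two do not match up scale by scale. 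Also, your telescoping step silently assumes that every small ball meets a \emph{unique} cluster; for the non-padded portion one must bound $\sum_i\alpha_i\nu_i(B)$, and controlling this simultaneously at all scales is where the real work lies. In~\cite{MN11-ultra} this required a two-phase construction (a bottom-up fusion of clusters tailored to a subsequent top-down pruning, with a pigeonhole argument aligning pruning scales), not a single top-down CKR pass. The paper acknowledges that one \emph{could} modify that involved construction to build $\nu$ directly alongside $S$, but chooses instead the much shorter submeasure route, which is the contribution you should be engaging with. If you want to salvage your plan, the right move is to not reprove the existence of $S$ at all: take Theorem~\ref{thm:sub meas} as given, and focus on showing that the covering functional $\xi$ dominates a measure on balls of the ultrametric $\rho$. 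The recursive mass-splitting you describe, with $\alpha_i\propto\xi(C_i)$ in place of $\mu(Y_i)^{1-\e}$ and done along the ball-tree of $\rho$ rather than a random CKR tree, is exactly Lemma~\ref{lem:sub-nosub}.
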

Recall that an ultrametric space is a metric space $(U,\rho)$ satisfying the strengthened triangle inequality $\rho(x,y)\le \max\{\rho(x,z),\rho(y,z)\}$ for all $x,y,z\in U$. Saying that $(S,d)$ embeds with distortion $D\in [1,\infty)$ into an ultrametric space means that there exists an ultrametric space $(U,\rho)$ and an injection $f:S\to U$ satisfying $d(x,y)\le \rho(f(x),f(y))\le Dd(x,y)$ for all $x,y\in S$. In the statement of Theorem~\ref{thm:sub meas}, and in the rest of this paper, given a metric space $(X,d)$, a point $x\in X$ and a radius $r\in [0,\infty)$, the corresponding closed ball is denoted $B_d(x,r)=\{y\in X:\ d(y,x)\le r\}$, and the corresponding open ball is denoted $B_d^\circ(x,r)=\{y\in X:\ d(y,x)< r\}$. (We explicitly indicate the underlying metric since the ensuing discussion involves multiple metrics on the same set.)

We call the metric measure space $(S,d,\nu)$ from Theorem~\ref{thm:our measure} an {\em ultrametric skeleton} of the metric measure space $(X,d,\mu)$. The literature contains several theorems about the existence of ``large" ultrametric subsets of metric spaces; some of these results will be mentioned below. As we shall see, the subset $S$ of Theorem~\ref{thm:our measure} must indeed be large, but it is also geometrically  ``spread out" with respect to the initial probability measure $\mu$. For example, if $\mu$ assigns positive mass to two balls $B_d(x,r)$ and $B_d(y,r)$, where $x,y\in X$ satisfy $d(x,y)> C_\e( r+1)$, then the probability measure $\nu$, which is supported on $S$, cannot assign full mass to any one of these balls. This is one reason why $(S,d,\nu)$ serves as a ``skeleton" of $(X,d,\mu)$.

More significantly, we call $(S,d,\mu)$ an ultrametric skeleton because it can be used to deduce global information about the entire initial metric measure space $(X,d,\mu)$. Examples of such global applications of statements that are implied by Theorem~\ref{thm:our measure} are described in~\cite{MN07,MN11-ultra}, and an additional example will be presented below. As a qualitative illustration of this phenomenon, consider a stochastic process $\{Z_t\}_{t\in T}$, assuming for simplicity that the index set $T$ is finite and that each random variable $Z_t$ has finite second moment. Equip $T$ with the metric $d(s,t)=\sqrt{\E\left[(Z_s-Z_t)^2\right]}$. Assume that there exists a unique (random) point $\tau\in T$ satisfying $Z_\tau=\max_{t\in T} Z_t$. Let $\mu$ be the law of $\tau$, and apply Theorem~\ref{thm:our measure}, say, with $\e=1/2$, to the metric measure space $(X,d,\mu)$. One obtains a subset $S\subseteq X$ that embeds into an ultrametric space with distortion $O(1)$, and a probability measure $\nu$ that is supported on $S$ and satisfies~\eqref{eq:measure growth} (with $\e=1/2$). If $\sigma\in S$ is a random point of $S$ whose law is $\nu$, then it follows that for every $x\in T$, $p\in (0,1)$ and $r\in [0,\infty)$, if $\sigma$ falls into $B_d(x,r)$ with probability at least $p$, then the global maximum $\tau$ falls into $B_d(x,O(r))$ with probability at least $p^2$. One can therefore always find a subset of $T$ that is more structured due to the fact that it is approximately an ultrametric space (e.g., such structure can be harnessed for chaining-type arguments), yet this subset reflects the location of the global maximum of $\{Z_t\}_{t\in T}$ in the above distributional/geometric sense. A quantitatively sharp variant of the above qualitative interpretation of Theorem~\ref{thm:our measure} is discussed in Section~\ref{sec:majorizing} below.

\subsection{Nonlinear Dvoretzky theorems}\label{sec:dvo} Nonlinear Dvoretzky theory, as initiated by Bourgain, Figiel and Milman~\cite{BFM86}, asks for theorems asserting that any ``large" metric space contains a ``large" subset that embeds with specified distortion into Hilbert space. We will see below examples of notions of ``largeness" of a metric space for which a nonlinear Dvoretzky theorem can be proved.  For an explanation of the relation of such problems to the classical Dvoretzky theorem~\cite{Dvo60}, see~\cite{BFM86,BLMN05,MN11-ultra}. Most known nonlinear Dvoretzky theorems actually obtain subsets that admit a low distortion embedding into an ultrametric space. Since ultrametric spaces admit an isometric embedding into Hilbert space~\cite{VT79}, such a result falls into the Bourgain-Figiel-Milman framework.  Often (see~\cite{BFM86,BLMN05}) one can prove an asymptotically matching impossibility result which shows that all subsets of a given metric space that admit a low distortion embedding into Hilbert space must be ``small". Thus, in essence, it is often the case that the best way to find an almost Hilbertian subset is actually to aim for a subset satisfying the seemingly more stringent requirement of being almost ultrametric.

Apply Theorem~\ref{thm:our measure} to an $n$-point metric space $(X,d)$, with $\mu(\{x\})=1/n$ for all $x\in X$. Since  $\nu$ is a probability measure on $S$, there exists $x\in X$ with $\nu(\{x\})\ge 1/|S|$. An application of~\eqref{eq:measure growth} with $r=0$ shows that $1/|S|\le \mu(\{x\})^{1-\e}=1/n^{1-\e}$, or $|S|\ge n^{1-\e}$. Since $S$ embeds into an ultrametric space with distortion $O(1/\e)$, this shows that Theorem~\ref{thm:our measure} implies the sharp solution of the Bourgain-Figiel-Milman nonlinear Dvoreztky problem that was first obtained in~\cite{MN07}. Sharpness in this context means that, as shown in~\cite{BLMN05}, there exists a universal constant $c\in (0,\infty)$ and for every $n\in \N$ there exists an $n$-point metric space $X_n$ such that every $S\subseteq X_n$ with $|S|\ge n^{1-\e}$ incurs distortion at least $c/\e$ in any embedding into Hilbert space. Thus, the distortion bound in Theorem~\ref{thm:our measure} cannot be improved (up to constants), even if we allow $S$ to embed into Hilbert space.

Assume that $(X,d)$ is a compact metric space of Hausdorff dimension greater than $\alpha\in (0,\infty)$. Then there exists~\cite{How95,Mattila} an $\alpha$-Frostman measure on $(X,d)$, i.e., a Borel probability measure $\mu$ satisfying $\mu(B_d(x,r))\le Kr^\alpha$ for every $x\in X$ and $r\in (0,\infty)$, where $K$ is a constants that may depend  on $X$ and $\alpha$ but not on $x$ and $r$. An application of Theorem~\ref{thm:our measure} to $(X,d,\mu)$ yields a compact subset $S\subseteq X$ that embeds into an ultrametric space with distortion $O(1/\e)$, and a Borel probability measure $\nu$ supported on $S$ satisfying $\nu(B_d(x,r))\le \mu(B_d(x,C_\e r))^{1-\e}\le K^{1-\e}C_\e^{(1-\e)\alpha}r^{(1-\e)\alpha}$ for all $x\in X$ and $r\in (0,\infty)$. Hence $\nu$ is a $(1-\e)\alpha$-Frostman measure on $S$, implying~\cite{Mattila} that $S$ has Hausdorff dimension at least $(1-\e)\alpha$. Thus Theorem~\ref{thm:our measure} implies the sharp solution of Tao's nonlinear Dvoretzky problem for Hausdorff dimension that was first obtained in~\cite{MN11-ultra}.

More generally, the following result was proved in~\cite{MN11-ultra} as the main step towards the solution of Tao's nonlinear Dvoretzky problem for Hausdorff dimension.
\begin{theorem}\label{thm:sub meas}
For every $\e\in (0,1)$ there exists $c_\e= e^{O(1/\e^2)}\in (0,\infty)$ with the following property.  Let $(X,d)$ be a compact metric space and let $\mu$ be a Borel probability measure on $X$. Then there exists a compact subset $S\subseteq X$  satisfying
\begin{enumerate}
\item $S$ embeds into an utrametric space with distortion $O(1/\e)$.
\item If $\{x_i\}_{i\in I}\subseteq X$ and $\{r_i\}_{i\in I}\subseteq [0,\infty)$ satisfy $\bigcup_{\in I}B_d(x_i,r_i)\supseteq S$ then
\begin{equation}\label{eq:ball cover}
\sum_{i\in I} \mu\left(B_d(x_i,c_\e r_i)\right)^{1-\e}\ge 1.
\end{equation}
\end{enumerate}
\end{theorem}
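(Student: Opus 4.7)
The plan is to deduce Theorem~\ref{thm:sub meas} from the main Theorem~\ref{thm:our measure} by using the probability measure $\nu$ produced there as a ``dual certificate" against covers of $S$. Given $\e \in (0,1)$, apply Theorem~\ref{thm:our measure} to $(X,d,\mu)$ with the same $\e$, obtaining a compact $S \subseteq X$ embedding into an ultrametric space with distortion $O(1/\e)$, together with a Borel probability measure $\nu$ on $S$ satisfying $\nu(B_d(x,r)) \le \mu(B_d(x,C_\e r))^{1-\e}$ for all $x \in X$ and all $r \ge 0$. The first conclusion of Theorem~\ref{thm:sub meas} is then identical with that of Theorem~\ref{thm:our measure}; it remains to verify the covering inequality~\eqref{eq:ball cover}, and I will take $c_\e$ to be $C_\e$ (or a harmless constant multiple thereof).

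For~\eqref{eq:ball cover}, fix a family of closed balls $\{B_d(x_i, r_i)\}_{i\in I}$ whose union contains $S$. Because the $B_d(x_i,r_i)$ are closed, compactness of $S$ does not immediately yield a finite subcover, so I enlarge radii slightly: for each $\delta > 0$ the open balls $B_d^\circ(x_i, r_i + \delta)$ still cover $S$, and hence a finite subset $F(\delta) \subseteq I$ already suffices. Countable subadditivity of $\nu$ together with the measure-growth inequality then gives
\[
1 \;=\; \nu(S) \;\le\; \sum_{i \in F(\delta)} \nu\bigl(B_d(x_i, r_i + \delta)\bigr) \;\le\; \sum_{i \in I} \mu\bigl(B_d(x_i, C_\e (r_i + \delta))\bigr)^{1-\e}.
\]
This bound holds for every $\delta > 0$. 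Sending $\delta \to 0^+$ and invoking continuity of $\mu$ from above on the decreasing intersections $\bigcap_{\delta > 0} B_d(x_i, C_\e(r_i+\delta)) = B_d(x_i, C_\e r_i)$ (absorbing an arbitrarily small enlargement of the constant $C_\e$ into $c_\e$, if needed, to interchange limit and sum) produces $\sum_{i \in I} \mu(B_d(x_i, c_\e r_i))^{1-\e} \ge 1$, as required.

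The one quantitative ingredient not visible from the statement of Theorem~\ref{thm:our measure} alone is the explicit growth rate $c_\e = e^{O(1/\e^2)}$. The statement quoted above only asserts the existence of some $C_\e$, so to pin down the claimed rate one must inspect the proof of Theorem~\ref{thm:our measure} and verify that the constant it produces satisfies $C_\e \le e^{O(1/\e^2)}$. The principal (and essentially only) obstacle is therefore the quantitative bookkeeping of this dependence through the construction of $\nu$; the deduction itself is a short duality argument plus a standard compactness-and-limit manipulation.
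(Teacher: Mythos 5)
Your deduction is mathematically sound as an implication, but it cannot serve as a proof of Theorem~\ref{thm:sub meas} within this paper, because the logical flow is precisely the reverse of what you propose. The paper does not establish Theorem~\ref{thm:sub meas} here at all: it is quoted verbatim from~\cite{MN11-ultra}, where it is proved by a direct two-phase construction (a bottom-up merging of points into a nested partition hierarchy followed by a top-down pruning step). The paper then \emph{derives} Theorem~\ref{thm:our measure} from Theorem~\ref{thm:sub meas} via the submeasure $\xi$ of~\eqref{eq:ded xi} and Lemma~\ref{lem:sub-nosub}. The one-line implication you write down --- $1=\nu(S)\le \sum_i\nu(B_d(x_i,r_i))\le \sum_i\mu(B_d(x_i,C_\e r_i))^{1-\e}$ --- is in fact recorded verbatim in the paper right after the statement of Theorem~\ref{thm:sub meas}, but explicitly as a consistency check and a warning of the two theorems' near-equivalence, not as a proof: the very next sentence says ``But, Theorem~\ref{thm:sub meas} is the main reason for the validity of the phenomenon described in Theorem~\ref{thm:our measure}: here we show how to formally deduce Theorem~\ref{thm:our measure} from Theorem~\ref{thm:sub meas}.'' Taking your route makes the argument circular.

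This circularity also resurfaces in your final paragraph about the constant. You say that pinning down $c_\e=e^{O(1/\e^2)}$ requires inspecting the proof of Theorem~\ref{thm:our measure} and verifying $C_\e\le e^{O(1/\e^2)}$. But in this paper the proof of Theorem~\ref{thm:our measure} produces $C_\e=O(c_\e/\e)$, where $c_\e$ is exactly the constant of Theorem~\ref{thm:sub meas} coming from~\cite{MN11-ultra}. So the quantitative bookkeeping you propose to carry out sends you straight back to~\cite{MN11-ultra} and not to anything internal to Theorem~\ref{thm:our measure}.

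A smaller remark: the compactness-plus-$\delta$-limit detour is unnecessary. Under the standard convention that the index set $I$ is countable (which is in any case the only regime where the statement can hold --- cover $S$ by singletons to see that an uncountable cover with $r_i=0$ would violate~\eqref{eq:ball cover} whenever $\mu$ is nonatomic), countable subadditivity of the Borel probability measure $\nu$ gives $1=\nu(S)\le\sum_{i\in I}\nu(B_d(x_i,r_i))$ directly, with no finite subcover, no enlargement of the radii, and no exchange of limit and sum. Your limiting argument is also more delicate than you acknowledge: if the sum $\sum_{i\in I}\mu(B_d(x_i,C_\e(r_i+\delta)))^{1-\e}$ is infinite for every $\delta>0$, dominated convergence does not apply and the passage to $\delta=0$ is not automatic, so the caveat ``absorbing a small enlargement of $C_\e$'' does not rescue the case $r_i=0$. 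None of this is fatal, but it illustrates that you have reproduced a harder version of a step the paper handles in one line.
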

Theorem~\ref{thm:sub meas} is a consequence of Theorem~\ref{thm:our measure}. Indeed, if $S\subseteq X$ and $\nu$ are the subset and probability measure from Theorem~\ref{thm:our measure}, then  $1=\nu(S)\le \sum_{i\in I}\nu(B_d(x_i,r_i))\le \sum_{i\in I}\mu(B_d(x_i,C_\e r_i))^{1-\e}$ whenever $\bigcup_{i\in I} B_d(x_i,r_i)\supseteq S$. But, Theorem~\ref{thm:sub meas} is the main reason for the validity of the phenomenon described in Theorem~\ref{thm:our measure}: here we show how to formally deduce Theorem~\ref{thm:our measure} from Theorem~\ref{thm:sub meas}, with $C_\e=O(c_\e/\e)=e^{O(1/\e^2)}$. Alternatively, with more work, one can repeat the proof of Theorem~\ref{thm:sub meas} in~\cite{MN11-ultra} while making changes to several lemmas in order to prove Theorem~\ref{thm:our measure} directly, and obtain $C_\e=c_\e$. Since the proof of Theorem~\ref{thm:sub meas} in~\cite{MN11-ultra} is quite involved, we believe that it is instructive to establish Theorem~\ref{thm:our measure} via the argument described here.


 \subsubsection{Majorizing measures and stochastic processes}\label{sec:majorizing} Theorem~\ref{thm:our measure} makes it possible to relate the nonlinear Dvoretzky framework of~\cite{MN11-ultra} to Talagrand's nonlinear Dvoretzky theorem~\cite{Tal87}, and consequently to Talagrand's majorizing measures theorem~\cite{Tal87}. Given a metric space $(X,d)$ let $\P_X$ be the Borel probability measures on $X$. The Fernique-Talagrand $\gamma_2$ functional is defined as follows.
\begin{equation}\label{eq:def gamma2}
\gamma_2(X,d)=\inf_{\mu\in \P_X}\sup_{x\in X} \int_0^\infty \sqrt{\log\left(\frac{1}{\mu(B(x,r))}\right)}dr.
\end{equation}
Talagrand's nonlinear Dvoretzky theorem~\cite{Tal87} asserts that every finite metric space $(X,d)$ has a subset $S\subseteq X$ that embeds into an ultrametric space with distortion $O(1)$ and\footnote{Here, and in what follows, the relations $\lesssim,\gtrsim$ indicate the corresponding inequalities up to factors which are universal constants. The relation $A \asymp B$ stands for $(A\lesssim B) \wedge (A \gtrsim B)$.} $\gamma_2(S,d)\gtrsim\gamma_2(X,d)$. Talagrand proved this nonlinear Dvoretzky theorem in order to prove his celebrated majorizing measures theorem, which asserts that if $\{G_x\}_{x\in X}$ is a Gaussian process and for $x,y\in X$ we set $d(x,y)=\sqrt{\E\left[(G_x-G_y)^2\right]}$, then $\E\left[\sup_{x\in X} G_x\right]\gtrsim \gamma_2(X,d)$. There is also a simpler earlier matching upper bound due to Fernique~\cite{Fer76}, so $\E\left[\sup_{x\in X} G_x\right]\asymp\gamma_2(X,d)$. The fact that Talagrand's nonlinear Dvoretzky theorem implies the majorizing measures theorem is simple; see~\cite[Prop.~13]{Tal87} and also the discussion in~\cite[Sec.~1.3]{MN11-ultra}.

To understand the link between Theorem~\ref{thm:our measure} and Talagrand's nonlinear Dvoretzky theorem, consider the following quantity, associated to every compact metric space $(X,d)$.
\begin{equation}\label{eq:def delta2}
\delta_2(X,d)=\sup_{\mu\in \P_X}\inf_{x\in X} \int_0^\infty \sqrt{\log\left(\frac{1}{\mu(B(x,r))}\right)}dr.
\end{equation}
Intuitively, $\gamma_2(X,d)$ should be viewed as a multi-scale version of a covering number, while $\delta_2(X,d)$ should be viewed  as a multi-scale version of a packing number.  It is therefore not surprising that $\gamma_2(X,d)\asymp \delta_2(X,d)$. In fact, in Section~\ref{sec:gamma} we note that $\gamma_2(U,\rho)=\delta_2(U,\rho)$ for every finite ultrametric space $(U,\rho)$, and $\delta_2(X,d)\ge \gamma_2(X,d)$ for every finite metric space $(X,d)$
(the latter inequality is an improvement of our original bound $\delta_2(X,d)\gtrsim \gamma_2(X,d)$, due to an elegant argument of Witold Bednorz~\cite{Bed11}).
The remaining estimate $\delta_2(X,d)\lesssim \gamma_2(X,d)$ will not be needed here, though it follows from our discussion (see Remark~\ref{re:formal}), and it also has a simpler direct proof.

Let $\mu\in \P_X$ satisfy
$
\delta_2(X,d)=\inf_{x\in X} \int_0^\infty \sqrt{\log\left(1/\mu(B(x,r))\right)}dr.
$
Theorem~\ref{thm:our measure} applied to $(X,d,\mu)$ yields  $S\subseteq X$ and an ultrametric $\rho:S\times S\to [0,\infty)$ satisfying $d(x,y)\le \rho(x,y)\le Kd(x,y)$ for all $x,y\in S$. Additionally, there exists  $\nu\in \P_S$ satisfying $\nu\left(B_d(x,r)\right)\le \sqrt{\mu(B_d(x,Kr))}$ for all $x\in X$ and $r\in [0,\infty)$. Here $K\in (0,\infty)$ is a universal constant. Since $B_\rho(x,r)\subseteq B_d(x,r)\cap S\subseteq B_\rho(x,Kr)$ for all $x\in S$ and $r\in [0,\infty)$, we have $\delta_2(S,d)\le \delta_2(S,\rho)=\gamma_2(S,\rho)\le K\gamma_2(S,d)$, where we used the fact that $\gamma_2(\cdot)$ and $\delta_2(\cdot)$ coincide for ultrametrics. Hence,




\begin{multline}\label{eq:deduce tal}
K\gamma_2(S,d)\ge \delta_2(S,d)\ge \inf_{x\in S} \int_0^\infty \sqrt{\log\left(\frac{1}{\nu(B_d(x,r))}\right)}dr\ge \inf_{x\in S} \int_0^\infty \sqrt{\log\left(\frac{1}{\sqrt{\mu(B_d(x,Kr))}}\right)}dr\\= \frac{1}{K\sqrt{2}} \inf_{x\in S} \int_0^\infty \sqrt{\log\left(\frac{1}{\mu(B_d(x,r))}\right)}dr\ge \frac{\delta_2(X,d)}{K\sqrt{2}}\ge \frac{\gamma_2(X,d)}{K\sqrt{2}}.
\end{multline}
This completes the deduction of Talagrand's nonlinear Dvoretzky theorem from Theorem~\ref{thm:our measure}.

\begin{remark}\label{re:formal}
It is easy to check (see~\cite[Lem.~6]{Tal87}) that $\gamma_2(S,d)\le 2\gamma_2(X,d)$ for every $S\subseteq X$  (and, even more trivially, $\delta_2(S,d)\le \delta_2(X,d)$). Thus, it follows from~\eqref{eq:deduce tal} that $\gamma_2(X,d)\gtrsim \delta_2(X,d)$ for every finite metric space $(X,d)$.
\end{remark}

Since the original 1987 publication of Talagrand's majorizing measures theorem, this theorem has been reproved and simplified in several subsequent works, yielding important applications and generalizations (mainly due to Talagrand himself). These proofs are variants of the same basic idea:  a greedy top-down construction, in which one looks at a given scale for a ball on which a certain functional is maximized, removes a neighborhood of this ball, and iterates this step on the remainder of the metric space. It seems that the framework described here is genuinely different. The proof of Theorem~\ref{thm:sub meas} in~\cite{MN11-ultra} has two phases. One first constructs a nested family of partitions in a bottom-up fashion: starting with singletons one iteratively groups the points together based on a gluing rule that is tailor-made in anticipation of the ensuing ``pruning" or ``sparsification" step. This second step is a top-down iterative removal of appropriately ``sparse" regions of the partitions that were constructed in the first step; here one combines  an analytic argument with the pigeonhole principle to show that there are sufficiently many potential pruning locations so that successive iterations of this step can be made to align appropriately. Our new approach  has the advantage that it yields distributional statements such as~\eqref{eq:measure growth}, the majorizing measures theorem itself being a result of integrating these pointwise estimates.

\subsection{Multiple measures}\label{sec:union intro} In anticipation of further applications of ultrametric skeletons, we end by addressing what is perhaps the simplest question that one might ask about these geometric objects: to what extent is the union of two ultrametric skeleton also an ultrametric skeleton? 

We show in Remark~\ref{rem:D1D2 sharp} that for arbitrarily large $D_1,D_2\in [1,\infty)$, one can find a finite metric space $(X,d)$, and two disjoint subsets $U_1,U_2\subseteq X$, such that each $U_i$ embeds into an ultrametric space with distortion $D_i$, yet any embedding of $U_1\cup U_2$ into an ultrametric space incurs distortion at least $(D_1+1)(D_2+1)-1$. In Section~\ref{sec:union ultra} we prove the following geometric result of independent interest (which, as explained above, is sharp up to lower order terms).


\begin{theorem}\label{thm:UMunion}
Fix $D_1,D_2\in [1,\infty)$. Let $(X,d)$ be a metric space and
$U_1,U_2\subseteq X$. Assume that $(U_1,d)$ embeds with distortion
$D_1$ into an ultrametric space and that $(U_2,d)$ embeds with distortion
$D_2$ into an ultrametric space. Then the metric
space $(U_1\cup U_2,d)$ embeds with distortion at most
$(D_1+2)(D_2+2)-2$ into an ultrametric space.
\end{theorem}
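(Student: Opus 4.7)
Let $\rho_1,\rho_2$ be the ultrametrics realizing the embeddings of $U_1,U_2$, normalized so that $d\le \rho_i\le D_i d$ on $U_i$, and set $D=(D_1+2)(D_2+2)-2$. My plan is to construct an ultrametric $\rho$ on $U_1\cup U_2$ satisfying $d\le \rho\le Dd$ by building an HST on $U_1\cup U_2$ that combines the HSTs of $\rho_1$ and $\rho_2$ scale by scale.

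For each scale $r>0$, let $\mathcal{P}_i(r)$ denote the partition of $U_i$ into $\rho_i$-balls of radius $r$; each such ball has $d$-diameter at most $r$, while distinct balls of the same radius are $d$-separated by strictly more than $r/D_i$. I will form a partition $\mathcal{Q}(r)$ of $U_1\cup U_2$ by starting from the disjoint union $\mathcal{P}_1(r/(D_2+2))\sqcup\mathcal{P}_2(r/(D_1+2))$ and selectively gluing a part $A\in\mathcal{P}_1(r/(D_2+2))$ with a part $B\in\mathcal{P}_2(r/(D_1+2))$ whenever the $d$-distance between $A$ and $B$ is at most a threshold of order $r$. Setting $\rho(x,y):=\inf\{\,r>0:x,y\text{ lie in the same part of }\mathcal{Q}(r)\,\}$ yields a genuine ultrametric as long as $\{\mathcal{Q}(r)\}_{r>0}$ is laminar. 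The distortion estimates then fall out: a within-$U_i$ pair $x,y$ first co-occurs at $r=(D_{3-i}+2)\rho_i(x,y)\le D_i(D_{3-i}+2)d(x,y)\le Dd(x,y)$; a cross pair has its containing parts glued at a scale of order $d(x,y)$, and careful tracking of the two part-diameters $r/(D_2+2)$ and $r/(D_1+2)$ together with the gluing threshold gives $\rho(x,y)\le Dd(x,y)$; and the lower bound $\rho\ge d$ follows because each part of $\mathcal{Q}(r)$ has $d$-diameter at most $r$.

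The hard part will be formulating the gluing rule so that (i) the family $\{\mathcal{Q}(r)\}_{r>0}$ is actually laminar, and (ii) the $d$-diameter of every part of $\mathcal{Q}(r)$ is truly bounded by $r$, notwithstanding chaining where several parts of $\mathcal{P}_1(r/(D_2+2))$ share a common close neighbor in $\mathcal{P}_2(r/(D_1+2))$ or vice versa. The naive rule of simultaneously gluing all sufficiently close pairs and taking transitive closure can produce long alternating chains that blow up diameters, so I expect the fix to be a matching-like gluing rule, coordinated top-down across scales, that exploits the strong $d$-separation between distinct $\rho_i$-balls to rule out the problematic chains with the chosen scaling factors $1/(D_i+2)$. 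The final constant $D=(D_1+2)(D_2+2)-2$ will then arise from optimizing these factors against the chain-avoidance constraints.
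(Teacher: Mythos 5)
Your high-level plan -- decompose $U_1$ and $U_2$ into ultrametric balls at coordinated scales, glue nearby pieces, and read off an ultrametric from a nested family of partitions -- is indeed the strategy of the paper's proof. But you have correctly identified the crux (chain avoidance when gluing) and then left it unresolved: you say you ``expect the fix to be a matching-like gluing rule'' without actually producing one. As written this is a plan, not a proof, and the gap is precisely where the difficulty lies.

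More concretely, the scaling factors you float are the wrong ones. You propose to take $\rho_1$-balls of radius $r/(D_2+2)$ and $\rho_2$-balls of radius $r/(D_1+2)$. When $D_1=D_2$ these are \emph{equal}, so the two sides are treated symmetrically, and the chain-avoidance argument cannot work: to guarantee that each $U_2$-piece is close to at most one $U_1$-piece, you need the $d$-separation between distinct $U_1$-pieces (which is about $\tfrac{1}{D_1}\times(\text{$\rho_1$-radius})$) to exceed twice the gluing threshold plus the $d$-diameter of a $U_2$-piece. With your symmetric factors this inequality fails for all $D_1,D_2\ge 1$. The paper's key device is an \emph{asymmetric} choice of radii: at a given scale it takes $\rho_1$-pieces of $d$-diameter roughly $\tfrac{D_1(D_2+2)}{(D_1+2)(D_2+2)-2}$ and $\rho_2$-pieces of $d$-diameter roughly $\tfrac{D_2}{(D_1+2)(D_2+2)-2}$ -- a ratio of about $D_1(D_2+2)/D_2$, so the $U_1$-pieces are strictly larger. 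With a gluing threshold of order $\tfrac{1}{(D_1+2)(D_2+2)-2}$, a single small $U_2$-piece cannot bridge two well-separated large $U_1$-pieces, and the gluing becomes a well-defined ``each $U_2$-piece is absorbed into at most one $U_1$-piece'' rule; there are no chains to worry about.

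There is a second, smaller gap: defining $\rho(x,y)=\inf\{r:\,x,y \text{ are in the same part of }\mathcal{Q}(r)\}$ requires $\{\mathcal{Q}(r)\}_{r>0}$ to be nested, which is not automatic if you build $\mathcal{Q}(r)$ independently at each scale. The paper instead builds a discrete sequence of partitions $P_0,P_1,\ldots$ top-down by recursion: each cell of $P_k$ is split using the single-scale lemma applied to $U_1\cap C$ and $U_2\cap C$, so nestedness holds by construction, the cell diameters shrink by a fixed factor $1-\delta$ per step, and the separation between siblings is a definite fraction of the parent's diameter. The ultrametric is then $\rho(x,y)=\diam_d(P_{k(x,y)}(x))$ where $k(x,y)$ is the last level at which $x,y$ share a cell. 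Adopting this recursive framing, together with the asymmetric radii above, is what closes both of your gaps.
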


Consequently, one can always find an ultrametric skeleton that is ``large" with respect to any finite list of probability measures.

\begin{corollary}\label{coro:many measures}
For every $\e\in (0,1)$ let $C_\e$ be as in Theorem~\ref{thm:our measure}. Let $(X,d)$ be a compact metric space, and let $\mu_1,\ldots,\mu_k$ be Borel probability measures on $X$. Then there exists a compact subset $S\subseteq X$, and Borel probability measures $\nu_1,\ldots,\nu_k$ supported on $S$, such that $S$ emebds into an ultrametric space with distortion at most $(O(1)/\e)^k$ and for every $x\in X$ and $r\in [0,\infty)$ we have $\nu_i\left(B_d(x,r)\right)\le \left(\mu_i(B_d(x,C_\e r)\right)^{1-\e}$ for all $i\in \{1,\ldots,k\}$.
\end{corollary}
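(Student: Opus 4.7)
The proof is a direct combination of Theorem~\ref{thm:our measure} applied to each measure separately with an iterated application of Theorem~\ref{thm:UMunion}. First, for each $i\in\{1,\ldots,k\}$ I would apply Theorem~\ref{thm:our measure} to the metric measure space $(X,d,\mu_i)$, obtaining a compact subset $S_i\subseteq X$ and a Borel probability measure $\nu_i$ supported on $S_i$ such that $(S_i,d)$ embeds into an ultrametric space with distortion $D=O(1/\e)$ and $\nu_i(B_d(x,r))\le \mu_i(B_d(x,C_\e r))^{1-\e}$ for every $x\in X$ and $r\in[0,\infty)$.

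Next, define $S=\bigcup_{i=1}^k S_i$, which is compact as a finite union of compact sets. Each $\nu_i$ is still a Borel probability measure supported on $S$ since $S_i\subseteq S$, and the required growth inequality involves only $\nu_i$ on balls in $X$, so it is unaffected by enlarging the ambient support set. The only remaining point is to control the ultrametric distortion of $(S,d)$.

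For that, I would iterate Theorem~\ref{thm:UMunion} in the obvious way: if $T\subseteq X$ embeds into an ultrametric space with distortion $D'$ and $S_{j+1}$ with distortion $D$, then $T\cup S_{j+1}$ embeds with distortion at most $(D'+2)(D+2)-2$. Setting $a_j$ to be the distortion of $S_1\cup\cdots\cup S_j$ and letting $b_j=a_j+2$, the recursion becomes $b_{j+1}\le b_j(D+2)$ with $b_1=D+2$, so $b_j\le (D+2)^j$ and hence $a_k\le (D+2)^k-2=\bigl(O(1/\e)\bigr)^k$, as required.

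The only step that is not entirely mechanical is the distortion recursion, and even that is routine once Theorem~\ref{thm:UMunion} is in hand; there is no conceptual obstacle in this deduction, because all of the analytic content is already packaged into Theorem~\ref{thm:our measure} (which provides the individual skeletons $(S_i,\nu_i)$) and Theorem~\ref{thm:UMunion} (which glues finitely many almost-ultrametric subsets into one).
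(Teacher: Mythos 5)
Your argument is correct and is precisely the intended deduction: apply Theorem~\ref{thm:our measure} to each $(X,d,\mu_i)$ to get $S_i$ and $\nu_i$, set $S=\bigcup_i S_i$, note that each $\nu_i$ and its growth bound are unaffected, and iterate Theorem~\ref{thm:UMunion} via the recursion $a_{j+1}+2\le(a_j+2)(D+2)$ to get distortion $(D+2)^k-2=(O(1)/\e)^k$. The paper states this corollary without a written proof, introducing it only with ``Consequently,'' and your reconstruction matches that implied argument.
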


\section{Proof of Theorem~\ref{thm:our measure}}\label{sec:proof}

A {\em submeasure} on a set $X$ is a function $\xi:2^X\to [0,\infty)$ satisfying the following conditions.
\begin{enumerate}
\item[(a)] $\xi(\emptyset) =0$,
\item[(b)] $A_1\subseteq A_2\subseteq X\implies \xi(A_1)\le \xi(A_2)$,
\item[(c)] $\{A_i\}_{i\in I}\subseteq X\implies \xi\left(\bigcup_{i\in I}A_i\right)\le \sum_{i\in I}\xi(A_i)$.
\end{enumerate}
If in addition $\xi(X)=1$ we call $\xi$ a {\em probability submeasure}.

\begin{lemma}\label{lem:sub-nosub}
Let $(U,\rho)$ be a compact ultrametric space, and let $\xi:2^U\to [0,\infty)$ be a probability submeasure. Then there exists a Borel probability measure $\nu$ on $U$ satisfying $\nu(B_\rho(x,r))\le \xi(B_\rho(x,r))$ for all $x\in X$ and $r\in [0,\infty)$.
\end{lemma}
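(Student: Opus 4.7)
The key observation is that in an ultrametric space, the closed balls form a laminar family (any two are disjoint or nested), which can be arranged into a rooted tree $T$: the root is $U$, and the children of a non-singleton node $B$ are its maximal proper closed sub-balls, which together partition $B$. By compactness of $U$, each such node has only finitely many children, and every strictly descending chain of balls has diameter tending to $0$.

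I would push the unit mass down $T$ by a top-down recursion: set $\nu_T(U)=1$ and, for $B$ with children $B_1,\dots,B_k$,
\[
\nu_T(B_i) \eqdef \nu_T(B) \cdot \frac{\xi(B_i)}{\sum_{j=1}^{k}\xi(B_j)},
\]
with the convention that $\nu_T(B_i)=0$ when the denominator vanishes (consistent because then $\xi(B)=0$ forces $\nu_T(B)=0$ by induction). Subadditivity of $\xi$ gives $\xi(B)\le\sum_j\xi(B_j)$, so the bound $\nu_T(B)\le \xi(B)$ propagates down the tree, while the conservation identity $\sum_i\nu_T(B_i)=\nu_T(B)$ holds by construction.

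To convert $\nu_T$ into a Borel probability measure on $U$, I would use the refining sequence of finite partitions $\PP_n=\{B_\rho(x,1/n):x\in U\}$ (finite by compactness). The conservation identity ensures that the values $\{\nu_T(B):B\in\PP_n\}$ form a consistent family as $n$ varies, and so Kolmogorov's extension theorem (applicable since $U$ is compact metrizable and the partitions generate the Borel $\sigma$-algebra) yields a Borel probability measure $\nu$ with $\nu(B)=\nu_T(B)$ for every $B\in\bigcup_n\PP_n$. Any closed ball $B_\rho(x,r)$ with $r>0$ is a finite disjoint union of $\PP_n$-blocks once $1/n\le r$, so iterated use of the conservation identity upgrades the bound to $\nu(B_\rho(x,r))=\nu_T(B_\rho(x,r))\le \xi(B_\rho(x,r))$.

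The main subtlety I expect to address is the $r=0$ case at non-isolated points $x\in U$, where $\{x\}$ is not a finite-depth leaf of $T$ but the intersection $\bigcap_n B_n$ of an infinite descending chain. Then $\nu(\{x\})=\lim_n\nu_T(B_n)$, and deducing $\nu(\{x\})\le\xi(\{x\})$ does not follow automatically from the recursive bound since $\xi$ need not be continuous from above; closing this gap is the main obstacle and likely requires a more careful distribution rule at each level (or a limiting argument exploiting how $\xi$ interacts with the laminar structure).
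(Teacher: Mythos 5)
Your construction is essentially the paper's: build the canonical tree of balls of a compact ultrametric space (compactness gives finite branching at each level and diameters tending to zero along every branch), push the unit mass down the tree proportionally to $\xi$, and observe that subadditivity of $\xi$ propagates the bound $\nu(C)\le\xi(C)$ down each level. The paper extends the resulting premeasure to a Borel measure via Carath\'eodory applied to the semi-ring of all (open) balls; you invoke Kolmogorov's extension theorem, which is not the off-the-shelf tool for this situation (it is tailored to product or projective-limit $\sigma$-algebras rather than to a premeasure on a semi-ring), but the underlying consistency argument is the same and the difference is cosmetic.

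Your worry about the $r=0$ case at a non-isolated point is a genuine subtlety that the paper's written proof also glosses over: the inductive bound is only established for $C$ in the finite partitions $P_j$, i.e.\ for balls of positive radius. However, the singleton bound already follows from what you have, with no change to the distribution rule. Write $\{x\}=\bigcap_n B_n$ with $B_n\in P_n$ the chain of blocks containing $x$, and let $s_n$ denote the total $\xi$-mass of the siblings of $B_n$ inside $B_{n-1}$, so that $\nu(B_n)=\prod_{m\le n}\xi(B_m)/(\xi(B_m)+s_m)$ (with the convention that a zero denominator kills the product, which is forced, since then subadditivity gives $\xi(B_{m-1})=0$ and hence $\nu(B_{m-1})=0$ by the inductive bound). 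We may assume $\xi(B_m)>0$ for all $m$. Set $L=\lim_n\xi(B_n)\ge\xi(\{x\})$. If $L=\xi(\{x\})$, continuity from above of $\nu$ gives $\nu(\{x\})=\lim_n\nu(B_n)\le\lim_n\xi(B_n)=\xi(\{x\})$. If $L>\xi(\{x\})$, then countable subadditivity applied to the cover of $B_n$ by $\{x\}$ and all deeper siblings gives $\xi(B_n)\le\xi(\{x\})+\sum_{m>n}s_m$; since the left side stays $\ge L>\xi(\{x\})$, the tails $\sum_{m>n}s_m$ do not vanish, so $\sum_m s_m=\infty$. As $\xi(B_m)\le 1$, the terms $a_m\eqdef s_m/(\xi(B_m)+s_m)\ge s_m/(1+s_m)$ satisfy $\sum_m a_m=\infty$, hence $\nu(B_n)=\prod_{m\le n}(1-a_m)\le e^{-\sum_{m\le n}a_m}\to 0\le\xi(\{x\})$. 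So the ``more careful distribution rule'' you anticipated needing is unnecessary; the limiting argument closes the gap.
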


\begin{remark}\label{rem:pathological}
It is known~\cite{HC75,Tal80} that there exist probability submeasures that do not dominate any nonzero measure (in the literature such measures are called pathological submeasures). Lemma~\ref{lem:sub-nosub} shows that probability submeasures on ultrametric spaces always dominate {\em on all balls} some probability measure.

\end{remark}

Assuming the validity of Lemma~\ref{lem:sub-nosub} for the moment, we prove Theorem~\ref{thm:our measure}.
\begin{proof}[Proof of Theorem~\ref{thm:our measure}]
Let $(X,d)$ be a compact metric space and $\mu$ a Borel probability measure on $X$. By Theorem~\ref{thm:sub meas} there exists a compact subset $S\subseteq X$ satisfying the covering estimate~\eqref{eq:ball cover}, and an ultrametric $\rho:S\times S\to [0,\infty)$ satisfying $d(x,y)\le \rho(x,y)\le \frac{K}{\e}d(x,y)$ for all $x,y\in S$, where $K$ is a universal constant.

For every $A\subseteq S$ define
\begin{equation}\label{eq:ded xi}
\xi(A)=\inf\left\{\sum_{i\in I} \mu\left(B_d(x_i,c_\e r_i)\right)^{1-\e}:\ \{(x_i,r_i)\}_{i\in I}\subseteq X\times [0,\infty)\ \wedge\  \bigcup_{i\in I} B_d(x_i,r_i)\supseteq A\right\}.
\end{equation}
In~\eqref{eq:ded xi} the index set $I$ can be countably infinite or finite, with the convention that an empty sum vanishes. One checks that $\xi:2^S\to [0,\infty)$ is a { submeasure} on $S$. Moreover, for every $x\in X$ and $r\in [0,\infty)$, by considering $B_d(x,r)$ as covering itself, we deduce from~\eqref{eq:ded xi} that
\begin{equation}\label{eq:xi small}
\xi\left(S\cap B_d(x,r)\right)\le \mu\left(B_d(x,c_\e r)\right)^{1-\e}.
\end{equation}
Since $\mu$ is a probability measure and $X$ has bounded diameter, it follows from~\eqref{eq:xi small} that $\xi(S)\le 1$. The covering estimate~\eqref{eq:ball cover} implies that $\xi(S)\ge 1$, so in fact $\xi$ is a probability submeasure on $S$.

An application of Lemma~\ref{lem:sub-nosub} to $(S,\rho,\xi)$ yields a Borel probability measure $\nu$ supported on $S$ and satisfying $\nu(B_\rho(y,r))\le \xi(B_\rho(y,r))$ for all $y\in S$ and $r\in [0,\infty)$. Fix $x\in X$ and $r\in [0,\infty)$. The desired estimate~\eqref{eq:measure growth} holds trivially if $B_d(x,r)\cap S=\emptyset$, so we may assume that there exists $y\in S$ with $d(x,y)\le r$. Thus
$$
S\cap B_d(x,r)\subseteq S\cap B_d(y,2r)\subseteq B_\rho\left(y,\frac{2K}{\e}r\right)\subseteq S\cap B_d\left(y,\frac{2K}{\e}r\right)\subseteq S\cap B_d\left(x,\left(1+\frac{2K}{\e}\right)r\right).
$$
It follows that
\begin{multline*}
\nu(B_d(x,r))\le \nu\left(B_\rho\left(y,\frac{2K}{\e}r\right)\right)\le \xi\left(B_\rho\left(y,\frac{2K}{\e}r\right)\right)\\\le \xi\left(S\cap B_d\left(x,\left(1+\frac{2K}{\e}\right)r\right)\right)\le \mu \left(B_d\left(x,c_\e\left(1+\frac{2K}{\e}\right)r\right)\right)^{1-\e}.
\end{multline*}
This completes the deduction of Theorem~\ref{thm:our measure} from Theorem~\ref{thm:sub meas} and Lemma~\ref{lem:sub-nosub}.
\end{proof}

Prior to proving Lemma~\ref{lem:sub-nosub}, we review some basic facts about compact ultrametric spaces; see~\cite{Hug04} for an extended and more general treatment of this topic. Fix a compact ultrametric space $(U,\rho)$. For every $r\in (0,\infty)$ we have $|\{B_\rho(x,s):\ (x,s)\in U\times [r,\infty)\}|<\infty$, i.e.,  there are only finitely many closed balls in $U$ of radius at least $r$. Indeed, by compactness  $U$ contains only finitely many disjoint closed balls of radius at least $r$. Since  $B_\rho(x,s)\cap B_\rho(y,t)\in \{\emptyset,B_\rho(x,s),B_\rho(y,t)\}$ for every $x,y\in U$ and $s,t\in [0,\infty)$, assuming for contradiction that $\{B_\rho(x,s):\ (x,s)\in U\times [r,\infty)\}$ is infinite, we deduce that there exist $\{(x_i,s_i)\}_{i=1}^\infty\subseteq U\times [r,\infty)$ satisfying $B_\rho(x_i,s_i)\subsetneq B_\rho(x_{i+1},s_{i+1})$ for all $i\in \N$. Fix $y_i\in B_\rho(x_{i+1},s_{i+1})\setminus B_\rho(x_i,s_i)$. If $i<j$ then $y_j\notin B_\rho(x_{j},s_j)\supseteq B_\rho(x_{i+1},s_{i+1})$ and $y_i\in B_\rho(x_{i+1},s_{i+1})$. Hence $ s_{i+1}< \rho(y_j,x_{i+1})\le \max\{\rho(y_j,y_i),\rho(y_i,x_{i+1})\}\le \max\{\rho(y_j,y_i),s_{i+1}\}$. It follows that $\rho(y_i,y_j)> s_{i+1}\ge r$ for all $j>i$, contradicting the compactness of $(U,\rho)$.

A consequence of the above discussion is that for every $x\in U$ and $r\in (0,\infty)$ there exists $\e\in (0,\infty)$ such that $B_\rho(x,r)=B_\rho(x+\e)$. Therefore $B_\rho(x,r)=B_\rho^\circ(x,r+\e/2)$. Similarly, since $B_\rho^\circ (x,r)=\bigcup_{\delta\in (0,r/2]} B_\rho(x,r-\delta)$, where there are only finitely many distinct balls appearing this union, there exists $\delta\in (0,r/2]$ such that $B_\rho^\circ(x,r)=B_\rho(x,r-\delta)$. Thus every open ball in $U$ of positive radius is also a closed ball, and every closed ball in $U$ of positive radius is also and open ball. Consider the equivalence relation on $U$ given by $x\sim y\iff \rho(x,y)<\diam_\rho(U)$. This is indeed an equivalence relation since $\rho$ is an ultrametric. The corresponding equivalence classes are all of the form $B_\rho^\circ(x,\diam_\rho(U))$ for some $x\in U$. Being open sets that cover $U$, there are only finitely many such equivalence classes, say, $\{B_1^1,B^2,\ldots,B_1^{k_1}\}$. By the above discussion, each of the open balls $B_1^i$ is also a closed ball, and hence $(B_1^i,\rho)$ is a compact ultrametric space.  We can therefore continue the above construction iteratively, obtaining a sequence  $\{P_j\}_{j=0}^\infty$ of partitions of $U$ with the following properties.
\begin{enumerate}
\item $P_0=\{U\}$.
\item $P_j$ is finite for all $j$.
\item $P_{j+1}$ is a refinement of $P_j$ for all $j$.
\item Every $C\in P_j$ is of the form $B_\rho^\circ(x,r)$ for some $x\in U$ and $r\in [0,\infty)$.
\item For every $j$, if $C\in P_j$ is not  a singleton then  there exists $x_1,\ldots,x_k\in U$ such that $\{B_\rho^\circ(x_i,\diam_\rho(C))\}_{i=1}^k\subseteq P_{j+1}$, the open balls $\{B_\rho^\circ(x_i,\diam_\rho(C))\}_{i=1}^k$ are disjoint, and $C=\bigcup_{i=1}^k B_\rho^\circ(x_i,\diam_\rho(C))$.
\item $\lim_{j\to \infty} \max_{C\in P_j} \diam_\rho(C)=0$.
\item For every $x\in U$ and $r\in (0,\infty)$ there exists $j$ such that $B_\rho^\circ(x,r)\in P_j$.
\end{enumerate}
The first five items above are valid by construction. The sixth item follows from the fact that for all $j\in \N$ either $P_{j-1}$ consists of singletons or  $\max_{C\in P_j} \diam_\rho(C)<\max_{C\in P_{j-1}} \diam_\rho(C)$. Since for every $r\in (0,\infty)$ there are only finitely many balls of radius at least $r$ in $U$, necessarily $\lim_{j\to \infty} \max_{C\in P_j} \diam_\rho(C)=0$. To prove the seventh item above, assume for contradiction that $(x,r)\in U\times (0,\infty)$ is such that $B_\rho^\circ(x,r)\notin P_j$ for all $j$. Since the set $\{B_\rho^\circ(x,s)\}_{s\ge r}$ is finite, and $B_\rho^\circ(x,\diam(U)+1)=U\in P_0$, we may assume without loss of generality that $B_\rho^\circ(x,s)\in \bigcup_{j=0}^\infty P_j$ for all $s\in (r,\infty)$. But since $B_\rho(x,r)=B^\circ_\rho(x,s)$ for some $s\in (r,\infty)$, it follows that $B_\rho(x,r)\in P_j$ for some $j$. In particular, $B_\rho^\circ(x,r)\neq B_\rho(x,r)$, implying that $\diam_\rho(B_\rho(x,r))=r$. Therefore by construction $B_\rho^\circ(x,r)\in P_{j+1}$, a contradiction.

\begin{proof}[Proof of Lemma~\ref{lem:sub-nosub}]
Let $\{P_j\}_{j=0}^\infty$ be the sequence of partitions of $U$ that was constructed above. We will first define $\nu$ on $\S=\bigcup_{j=0}^\infty P_j\cup\{\emptyset\}$, which is the set of all open balls in $U$ (allowing the radius to vanish, in which case the corresponding open ball is empty). Setting $\nu(X)=1$ and $\nu(\emptyset)=0$, assume inductively that $\nu$ has been defined on $P_j$. For $C\in P_{j+1}$ let $D\in P_j$ be the unique set satisfying $C\subseteq D$. There exist disjoint sets $C_1,\ldots,C_k\in P_{j+1}$, with $C\in \{C_1,\ldots,C_k\}$, such that $D=C_1\cup\cdots\cup C_k$.  Define
\begin{equation}\label{eq:def nu}
\nu(C)=\frac{\xi(C)}{\sum_{i=1}^k\xi(C_i)}\cdot \nu(D).
\end{equation}
This completes the inductive definition of $\nu:\S\to [0,\infty)$.

We claim that one can apply the Carath\'eodory extension theorem to extend $\nu$ to a Borel measure on $U$. To this end, note that $\S$ is a semi-ring of sets. Indeed, $\S$ is closed under intersection since $C\cap D\in \{\emptyset, C,D\}$ for all $C,D\in \S$. We therefore need to check that for every $C,D\in \S$, the set $D\setminus C$ is a finite disjoint union of elements in $\S$. For this purpose we may assume that $D\setminus C\neq \emptyset$, implying that $C\subsetneq D$. Assume that $C\in P_j$ and $D\in P_i$ for $i<j$. Let $C_1,\ldots,C_k\in P_j$ be the distinct elements of $P_j$ that are contained in $D$, enumerated so that $C=C_1$. Then $D\setminus C=C_2\cup\cdots\cup C_k$, and this union is disjoint, as required.

In order to apply the Carath\'eodory extension theorem, it remains to check that if $\{A_i\}_{i=1}^\infty\subseteq \S$ are pairwise disjoint and $\bigcup_{i=1}^\infty A_i\in \S$, then $\nu\left(\bigcup_{i=1}^\infty A_i\right)=\sum_{i=1}^\infty \nu(A_i)$. Since all the elements of $\S$ are both open and closed, compactness implies that it suffices to show that if $A_1,\ldots,A_m\in \S$ are pairwise disjoint and $A_1\cup\cdots\cup A_m\in \S$, then $\nu(A_1\cup\cdots\cup A_m)=\sum_{i=1}^m\nu(A_i)$. We proceed by induction on $m$, the case $m=1$ being vacuous. For every $i\in \{1,\ldots, m\}$ there is a unique $k_i\in \N$ such that $A_i\in P_{k_i-1}\setminus P_{k_i}$. Define $k=\max\{k_1,\ldots,k_m\}$. If $k=1$ then necessarily $m=1$ and $A_1=U$. Assume that $k>1$ and fix $j\in \{1,\ldots,m\}$ satisfying $k_j=k$. Let $D\in P_{k-2}$ be the unique element of $P_{k-2}$ containing $A_j$, and let $C_1,\ldots,C_\ell\in P_{k-1}$ be the distinct elements of $P_{k-1}$ contained in $D$. Since $A_1\cup\cdots \cup A_m$ is a ball containing $A_j\subseteq D$, we have $A_1\cup\cdots \cup A_m\supseteq D=C_1\cup\cdots\cup C_\ell$. By maximality of $k$ it follows that $A_j\in \{C_1,\ldots,C_\ell\}\subseteq \{A_1,\ldots,A_m\}$. For $i\in \{1,\ldots,\ell\}$ let $n_i\in \{1,\ldots,m\}$ be such that $C_i=A_{n_i}$. Since $\left(\bigcup_{i\in \{1,\ldots,m\}\setminus \{n_1,\ldots,n_\ell\}}A_i\right)\bigcup D=\bigcup_{i=1}^m A_i$, the inductive hypothesis implies that $\sum_{i\in \{1,\ldots,m\}\setminus \{n_1,\ldots,n_\ell\}}\nu(A_i)+\nu(D)=\nu(A_1\cup\cdots\cup A_m)$. But by our definition~\eqref{eq:def nu} we have $\nu(D)=\nu(A_{n_1})+\cdots+\nu(A_{n_\ell})$, so that indeed $\nu(A_1\cup\cdots\cup A_m)=\sum_{i=1}^m\nu(A_i)$.

Having defined the Borel probability measure $\nu$, it remains to check by induction on $j$ that if $C\in P_j$ then $\nu(C)\le \xi(C)$. If $j=0$ then $C=U$ and $\nu(U)=\xi(U)=1$. If $j\ge 1$ then let $D\in P_{j-1}$ satisfy $C\subseteq D$. There exist disjoint sets $C_1,\ldots,C_k\in P_{j+1}$, with $C\in \{C_1,\ldots,C_k\}$, such that $D=C_1\cup\cdots\cup C_k$. Since $\xi$ is a submeasure, $\xi(D)\le \xi(C_1)+\cdots+\xi(C_k)$. By the inductive hypothesis $\nu(D)\le \xi(D)$. Our definition~\eqref{eq:def nu} now implies that $\nu(C)\le \xi(C)$. The proof of Lemma~\ref{lem:sub-nosub} is complete.
\end{proof}

\section{$\gamma_2(X,d)$ and $\delta_2(X,d)$}\label{sec:gamma}

Let $(X,d)$ be a finite metric space. For every measurable $\phi:(0,\infty)\to [0,\infty)$ define
$$
\gamma_\phi(X,d)=\inf_{\mu\in \P_X} \sup_{x\in X} \int_0^\infty \phi(\mu(B_d(x,r)))dr,
$$
and
$$
\delta_\phi(X,d)=\sup_{\mu\in \P_X} \inf_{x\in X} \int_0^\infty \phi(\mu(B_d(x,r)))dr.
$$
Thus $\gamma_2(\cdot)=\gamma_\phi(\cdot)$ and $\delta_2(\cdot)=\delta_\phi(\cdot)$ for $\phi(x)=\sqrt{\log(1/x)}$. The following lemma is a variant of an argument of Bednorz~\cite[Lem.~4]{Bed11}. The elegant proof below was shown to us by Keith Ball; it is a generalization and a major simplification of our original proof of the estimate $\delta_2(X,d)\gtrsim \gamma_2(X,d)$.
\begin{lemma}\label{lem:bed-ball}
Assume that $\phi:(0,\infty)\to (0,\infty)$ is continuous and $\lim_{x\to 0^+}\phi(x)=\infty$.  Then $\delta_\phi(X,d)\ge \gamma_\phi(X,d)$.
\end{lemma}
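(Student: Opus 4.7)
My plan is a minimax-style argument. Enumerate $X = \{x_1, \ldots, x_n\}$ and set $F_i(\mu) := \int_0^\infty \phi(\mu(B_d(x_i, r)))\,dr$; then $\gamma_\phi(X,d) = \inf_\mu \max_i F_i(\mu)$ and $\delta_\phi(X,d) = \sup_\mu \min_i F_i(\mu)$, and the claim is equivalent to producing a single $\mu^\ast \in \P_X$ with $F_i(\mu^\ast) \geq \gamma_\phi(X,d)$ for every $i$. A natural first step is to truncate $\phi$ to $\phi_N := \min(\phi, N)$: since $\lim_{u \to 0^+}\phi(u) = \infty$, one has $\phi_N \nearrow \phi$ pointwise, and monotone convergence together with compactness of $\P_X$ yields $\gamma_{\phi_N} \to \gamma_\phi$ and $\delta_{\phi_N} \to \delta_\phi$, reducing the problem to the case of bounded continuous $\phi$ on the compact convex parameter space.

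The structural observation that drives the argument is the layer-cake decomposition $\phi_N(u) = \int_0^\infty \mathbf{1}_{\phi_N(u) > t}\,dt$, which rewrites $F_i^{(N)}(\mu) = \int_0^N r_i(s(t), \mu)\,dt$, where $r_i(s, \mu) := \inf\{r : \mu(B_d(x_i, r)) \geq s\}$ and $s(t)$ is a suitable level function. The crucial point is that for each fixed $s > 0$ the slice $\mu \mapsto r_i(s, \mu)$ is \emph{quasi-concave} on $\P_X$: its super-level set $\{\mu : r_i(s, \mu) \geq c\} = \{\mu : \mu(B_d^\circ(x_i, c)) < s\}$ is convex, being cut out by a linear inequality in~$\mu$. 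A direct Farkas/LP-duality argument applied at each level then gives the per-level identity $\sup_\mu \min_i r_i(s, \mu) = \inf_\mu \max_i r_i(s, \mu)$, with a nonempty convex set of optimizers at which $r_i(s, \cdot)$ is constant in~$i$.

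The crux of the proof, and the step where I expect Keith Ball's elegant simplification to lie, is to couple these per-level optimizers into a single witness $\mu^\ast$ realizing the integrated inequality. My intended route is to apply a Kakutani-Glicksberg-Fan fixed-point theorem to the best-response correspondence $(\mu, \sigma) \mapsto \bigl(\operatorname{argmin}_{\mu'} \sum_i \sigma_i F_i^{(N)}(\mu'),\ \operatorname{argmax}_{\sigma'} \sum_i \sigma'_i F_i^{(N)}(\mu)\bigr)$ on $\P_X \times \Delta_n$, using continuity of the payoff $H(\mu, \sigma) = \sum_i \sigma_i F_i^{(N)}(\mu)$ and convexity/compactness of the domain; the resulting saddle point would give the minimax identity $\sup_\mu \inf_\sigma H = \inf_\mu \sup_\sigma H$, hence $\delta_{\phi_N} \geq \gamma_{\phi_N}$. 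Letting $N \to \infty$ then finishes the proof.

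The principal obstacle I anticipate is that $F_i^{(N)}$, as a superposition in $s$ of the quasi-concave slices $r_i(s, \cdot)$, is generally not itself quasi-concave in $\mu$, so Sion's minimax theorem does not apply to $H$ directly. Reconciling this tension requires either extracting a uniform-in-$s$ optimizer from the per-level saddle points via a careful continuity/compactness argument, or rearranging the order of integration so that quasi-concavity is used only at each fixed level $s$ before reassembling the full functional---the step that makes this proof delicate but, in its cleanest form, elegant.
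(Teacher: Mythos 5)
Your minimax framing has a structural flaw that makes the intended argument unrecoverable, not merely delicate. Writing $H(\mu,\sigma)=\sum_i\sigma_iF_i(\mu)$, the two quantities you must compare are $\delta_\phi=\sup_\mu\inf_\sigma H$ and $\gamma_\phi=\inf_\mu\sup_\sigma H$, and the ``identity'' you propose, $\sup_\mu\inf_\sigma H=\inf_\mu\sup_\sigma H$, has $\mu$ in the \emph{outer} position on both sides. This is not the statement any minimax or saddle-point theorem yields: Sion, Kakutani--Glicksberg--Fan, and von Neumann all equate $\sup_a\inf_b$ with $\inf_b\sup_a$, where the \emph{order} of quantifiers is swapped but each variable keeps its role. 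If you actually extract a Nash equilibrium from your best-response correspondence (with $\mu$ minimizing and $\sigma$ maximizing, as written), you get $\inf_\mu\sup_\sigma H=\sup_\sigma\inf_\mu H$, and the right-hand side is neither $\gamma_\phi$ nor $\delta_\phi$. Worse, what you are implicitly aiming for---the equality $\delta_\phi=\gamma_\phi$---is false in general: the paper's Remark~\ref{rem:star} exhibits a star metric with $\gamma_2<\delta_2$. So the obstacle you flagged (lack of quasi-concavity of $F_i^{(N)}$) is not a technicality to be engineered around; it is the symptom of the target statement being wrong.

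The route that actually works sidesteps convexity entirely. The point is not to play a game between $\mu$ and $\sigma$, but to produce a single \emph{equalizing} measure $\mu^\ast\in\P_X$ at which all the $F_i(\mu^\ast)$ coincide; then $\delta_\phi\ge\min_iF_i(\mu^\ast)=\max_iF_i(\mu^\ast)\ge\gamma_\phi$ is immediate. The paper (following Keith Ball's simplification of Bednorz's argument) achieves this with a Brouwer fixed-point argument on the simplex: set $f_i(\mu)=(1+F_i(\mu))^{-1}$ when $\mu_i>0$ and $f_i(\mu)=0$ when $\mu_i=0$, normalize $F(\mu)=(f_1(\mu),\ldots,f_n(\mu))/\sum_if_i(\mu)$, and observe that $\lim_{x\to0^+}\phi(x)=\infty$ guarantees continuity of $F$ and that $F$ maps each face of $\Delta_{n-1}$ into itself. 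A standard consequence of Brouwer then gives surjectivity, so some $\mu^\ast$ maps to the barycenter, which is exactly the equalizing condition. No truncation of $\phi$, no layer-cake decomposition, no quasi-concavity, and no minimax theorem is needed. Your quasi-concavity observation about the super-level sets of $r_i(s,\cdot)$ is correct and might be of independent interest, but it does not feed into a proof of the lemma along the lines you sketched.
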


\begin{proof}
Write $X=\{x_1,\ldots,x_n\}$. Thus $\P_X$ can be identified with the $(n-1)$-dimensional simplex $\Delta_{n-1}=\{(\mu_1,\ldots,\mu_n)\in [0,1]^n;\ \mu_1+\cdots+\mu_n=1\}$ (by setting $\mu\{x_i\})=\mu_i$).

Define $f_1,\ldots,f_n:\Delta_{n-1}\to [0,\infty)$ by
$$
f_i(\mu)=\left\{\begin{array}{ll}0&\mathrm{if\ }\mu_i=0,\\
\left(1+\int_0^\infty \phi(\mu(B(x_i,r)))dr\right)^{-1}&\mathrm{if\ }\mu_i>0.\end{array}\right.
$$
Writing $S(\mu)=\sum_{i=1}^n f_i(\mu)$, we define $F:\Delta_{n-1}\to \Delta_{n-1}$ by $F(\mu)=(f_1(\mu),\ldots,f_n(\mu))/S(\mu)$. Since $\phi$ is continuous and $\lim_{x\to 0^+}\phi(x)=\infty$, all the $f_i$ are continuous on $\Delta_{n-1}$. Since each $\mu\in \Delta_{n-1}$ has at least one positive coordinate, $S(\mu)>0$. Thus $F$ is continuous. Note that by definition $F$ maps each face of $\Delta_{n-1}$ into itself. By a standard reformulation of the Brouwer fixed point theorem (see, e.g.,  \cite[Sec.~$4.29\frac12_+$]{Gro07}), it follows that $f(\Delta_{n-1})=\Delta_{n-1}$. In particular, there exists $\mu\in \Delta_{n-1}$ for which $F(\mu)=(1/n,\ldots,1/n)$. In other words, there exists $\mu\in \P_X$ such that $\int_{0}^\infty  \phi(\mu(B_d(x,r)))dr$ does not depend on $x\in X$. Hence,
\begin{equation*}
\delta_\phi(X,d)\ge  \inf_{x\in X} \int_0^\infty \phi(\mu(B_d(x,r)))dr= \sup_{x\in X} \int_0^\infty \phi(\mu(B_d(x,r)))dr\ge \gamma_\phi(X,d).\qedhere
\end{equation*}
\end{proof}

\begin{lemma}\label{lem:ultra equals}
Assume that $\phi:(0,\infty)\to [0,\infty)$ is non-increasing. Let $(U,\rho)$ be a finite ultrametric space. Then $\delta_\phi(U,\rho)\le \gamma_\phi(U,\rho)$.
\end{lemma}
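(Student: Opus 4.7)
The plan is to exploit the hierarchical structure of balls in $(U,\rho)$ encoded by the partition sequence $\{P_j\}_{j=0}^\infty$ constructed in Section~\ref{sec:proof}. The central combinatorial claim is: \emph{for any pair of probability measures $\mu,\mu'\in \P_U$, there exists a point $x^*\in U$ such that $\mu'(B)\le \mu(B)$ for every closed ball $B\subseteq U$ containing $x^*$.} Granting this claim, the lemma follows by a monotonicity argument. Pick $\mu\in \P_U$ that realizes (or nearly realizes) $\delta_\phi(U,\rho)=\inf_{x\in U}\int_0^\infty \phi(\mu(B_\rho(x,r)))\,dr$, fix an arbitrary $\mu'\in \P_U$, and apply the claim to obtain $x^*$. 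Because every $B_\rho(x^*,r)$ is a closed ball containing $x^*$, we get $\mu'(B_\rho(x^*,r))\le \mu(B_\rho(x^*,r))$ for all $r\ge 0$, whence non-increasingness of $\phi$ yields
\begin{equation*}
\sup_{x\in U}\int_0^\infty \phi(\mu'(B_\rho(x,r)))\,dr\ge \int_0^\infty \phi(\mu'(B_\rho(x^*,r)))\,dr\ge \int_0^\infty \phi(\mu(B_\rho(x^*,r)))\,dr\ge \delta_\phi(U,\rho).
\end{equation*}
Taking the infimum over $\mu'\in \P_U$ on the left then gives $\gamma_\phi(U,\rho)\ge \delta_\phi(U,\rho)$.

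To produce the point $x^*$ I would descend the tree of balls greedily. Set $U_0:=U\in P_0$, so that $\mu(U_0)=\mu'(U_0)=1$. Inductively, suppose that $U_j\in P_j$ has been selected with $\mu(U_j)\ge \mu'(U_j)$; if $U_j$ is not a singleton, partition it into its children $C_1,\ldots,C_k\in P_{j+1}$. Since $\sum_{i=1}^k \mu(C_i)=\mu(U_j)\ge \mu'(U_j)=\sum_{i=1}^k \mu'(C_i)$, an averaging argument yields some $i$ with $\mu(C_i)\ge \mu'(C_i)$, and we set $U_{j+1}:=C_i$. Because $U$ is finite, this procedure must terminate at a singleton $\{x^*\}$ after finitely many steps, and by construction every link $U_j$ of the resulting chain is a ball containing $x^*$ on which $\mu'\le \mu$.

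To verify the claim, I would then invoke the preparatory structure theory from Section~\ref{sec:proof}: every closed ball of positive radius in $(U,\rho)$ coincides with an open ball and therefore belongs to $\bigcup_{j=0}^\infty P_j$, while the closed ball $\{x^*\}$ of radius $0$ also appears as a member of some $P_j$ once the hierarchy has refined down to singletons (which it does, in finitely many steps, for finite $U$). Hence every closed ball $B\subseteq U$ containing $x^*$ is precisely one of the chosen $U_j$, and the inequality $\mu'(B)\le \mu(B)$ holds for all of them. The only real subtlety in the argument is this identification of arbitrary balls through $x^*$ with links of the greedy chain; everything else reduces to a single averaging step together with monotonicity of $\phi$.
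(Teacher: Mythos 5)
Your proposal is correct and follows essentially the same route as the paper's proof. The paper phrases the key combinatorial fact as ``if $\mu(U)\le \nu(U)$ then there is a point $a$ with $\mu(B_\rho(a,r))\le\nu(B_\rho(a,r))$ for all $r$'' and proves it by induction on $|U|$, descending one level of the ball-hierarchy at a time via the same averaging argument you use; your greedy tree descent through the partitions $\{P_j\}$ is just an unrolling of that induction, and the final monotonicity-of-$\phi$ step is identical up to relabeling which of the two measures is the extremizer.
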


\begin{proof}
We claim that if $\mu, \nu$ are nonnegative measures on $U$ satisfying $\mu(U)\le \nu(U)$ then there exists $a\in U$ satisfying $\mu(B_\rho(a,r))\le \nu(B_\rho(a,r))$ for all $r\in (0,\infty)$. This would imply the desired estimate since if $\mu,\nu\in \P_X$ are chosen so that  $\sup_{x\in X} \int_0^\infty \phi(\mu(B_\rho(x,r)))dr=\gamma_\phi(U,\rho)$ and $\inf_{x\in X} \int_0^\infty \phi(\mu(B_\rho(x,r)))dr=\delta_2(U,\rho)$, then
$$
\gamma_\phi(U,\rho)\ge \int_0^\infty \phi(\mu(B_\rho(a,r)))dr\ge \int_0^\infty \phi(\nu(B_\rho(a,r)))dr\ge \delta_2(U,\rho).
$$

The proof of the existence of $a\in U$ is by induction on $|U|$. If $|U|=1$ there is nothing to prove. Otherwise, as explained in Section~\ref{sec:proof}, there exist $x_1,\ldots,x_k\in U$ such that the balls $\{B_\rho^\circ(x_i,\diam_\rho(U))\}_{i=1}^k$ are nonempty, pairwise disjoint, and $\bigcup_{i=1}^k B_\rho^\circ(x_i,\diam_\rho(U))=U$. It follows that $\sum_{i=1}^k \mu(B_\rho^\circ(x_i,\diam_\rho(U)))=\mu(U)\le\nu(U)=\sum_{i=1}^k \nu(B_\rho^\circ(x_i,\diam_\rho(U)))$. Consequently there exists $i\in \{1,\ldots,k\}$ such that $\mu(B_\rho^\circ(x_i,\diam_\rho(U)))\le \nu(B_\rho^\circ(x_i,\diam_\rho(U)))$.  By the inductive hypothesis there exists $a\in B_\rho^\circ(x_i,\diam_\rho(U))$ satisfying $\mu(B_\rho(a,r))\le \nu(B_\rho(a,r))$ for all $r<\diam_\rho(U)$. Since for $r\ge \diam_\rho(U)$ we have $B_\rho(a,r)=U$, the proof is complete.
\end{proof}

A combination of Lemma~\ref{lem:bed-ball} and Lemma~\ref{lem:ultra equals} yields the following corollary.

\begin{corollary}\label{cor:equals on ultra}
If $\phi:(0,\infty)\to [0,\infty)$ is non-increasing, continuous, and $\lim_{x\to 0^+}\phi(x)=\infty$,  then $\delta_\phi(U,\rho)= \gamma_\phi(U,\rho)$ for all finite ultrametric spaces $(U,\rho)$.
\end{corollary}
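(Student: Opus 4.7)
The plan is simply to observe that the two preceding lemmas give matching inequalities under the stated hypotheses. Concretely, I would first verify that the hypotheses of Lemma~\ref{lem:bed-ball} are satisfied: continuity of $\phi$ and $\lim_{x\to 0^+}\phi(x)=\infty$ are assumed outright, so for any finite ultrametric space $(U,\rho)$ (which is in particular a finite metric space), Lemma~\ref{lem:bed-ball} immediately yields
\[
\delta_\phi(U,\rho)\ge \gamma_\phi(U,\rho).
\]
Note that the Brouwer-type fixed point argument in Lemma~\ref{lem:bed-ball} does not require monotonicity of $\phi$, so the extra assumption that $\phi$ is non-increasing is not used here.

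Next I would apply Lemma~\ref{lem:ultra equals}, whose sole hypothesis is that $\phi:(0,\infty)\to [0,\infty)$ is non-increasing; this is part of the corollary's hypotheses. Hence
\[
\delta_\phi(U,\rho)\le \gamma_\phi(U,\rho)
\]
for every finite ultrametric $(U,\rho)$. Combining the two inequalities yields the claimed equality $\delta_\phi(U,\rho)=\gamma_\phi(U,\rho)$, completing the proof.

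Since the work has been entirely carried out in the two lemmas, there is no genuine obstacle here; the only thing to check is that the hypotheses of each lemma are implied by those of the corollary, which is immediate. In particular, one need not recall the specific form $\phi(x)=\sqrt{\log(1/x)}$: the corollary holds in the more general functional form, which is consistent with the framework introduced at the beginning of Section~\ref{sec:gamma}.
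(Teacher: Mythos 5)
Your proof is correct and is essentially identical to the paper's (one-line) argument: the paper also just combines Lemma~\ref{lem:bed-ball} for $\delta_\phi \ge \gamma_\phi$ with Lemma~\ref{lem:ultra equals} for the reverse inequality on ultrametrics. Your verification that the corollary's hypotheses cover those of each lemma is the whole content of the proof.
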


\begin{remark}\label{rem:star} Consider the star metric $d_n$ on $\{0,1,\ldots,n\}$, i.e., $d_n(0,i)=1$ for all $i\in \{1,\ldots,n\}$ and $d_n(p,q)=2$ for all distinct $p,q\in \{1,\ldots,n\}$. The measure $\nu$ on $\{0,1,\ldots,n\}$ given by $\nu(\{0\})=0$ and $\nu(\{i\})=1/n$ for $i\in \{1,\ldots,n\}$, shows that $\delta_2(\{0,1,\ldots,n\},d_n)\ge 2\sqrt{\log n}$. At the same time, the measure $\mu$ on $\{0,1,\ldots,n\}$ given by $\mu(\{0\})=1/2$ and $\mu(\{i\})=1/(2n)$ for $i\in \{1,\ldots,n\}$, shows that $\gamma_2(\{0,1,\ldots,n\},d_n)\le \sqrt{\log(2n)}+\sqrt{\log(2n/(n+1))}\le (1/2+o(1))\delta_2(\{0,1,\ldots,n\},d_n)$. Thus, unlike the case of ultrametric spaces, for general metric spaces it is not always true that $\gamma_2(X,d)=\delta_2(X,d)$. Of course, due to Lemma~\ref{lem:bed-ball} and Remark~\ref{re:formal} we know that $\gamma_2(X,d)\asymp \delta_2(X,d)$. It seems plausible that always $\delta_2(X,d)\le 2 \gamma_2(X,d)$, but we do not investigate this here.
\end{remark}

\section{Unions of approximate ultrametrics}\label{sec:union ultra}

In this section we prove Theorem~\ref{thm:UMunion} and present some related examples. Below, given a partition $P$ of a set $X$, for $x\in X$ we denote by $P(x)$ the element of $P$ to which $x$ belongs.

\begin{lemma}\label{lem:union frag}
Fix  $D_1,D_2\ge 1$. Let $(X,d)$ be a metric space and let
$U_1,U_2\subseteq X$ be two bounded subsets of $X$. Assume that
$(U_1,d)$ embeds with distortion $D_1$ into an ultrametric space and that
$(U_2,d)$ embeds with distortion $D_2$ into an ultrametric space. Then for
every $\e\in (0,1)$ there is a partition $P$ of $U_1\cup U_2$ with
the following properties.
\begin{itemize}
\item For every $C\in P$,
\begin{equation}\label{eq:diam bound union}
\diam_d(C)\le (1-\delta)\diam_d(U_1\cup U_2),
\end{equation}
where
\begin{equation}\label{eq:def delta}
\delta\eqdef\frac{2\e
D_2}{(D_1D_2+2D_1+2D_2+2)(D_1D_2+2D_1+2D_2+2+\e)}.
\end{equation}
\item For every distinct $C_1,C_2\in P$,
\begin{equation}\label{eq:sep union}
d(C_1,C_2)\ge \frac{\diam_d(U_1\cup U_2)}{D_1D_2+2D_1+2D_2+2+\e}.
\end{equation}
\end{itemize}
\end{lemma}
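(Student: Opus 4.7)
The plan is to build the partition $P$ as the set of connected components of a bipartite graph whose vertices are ultrametric balls in $U_1$ and $U_2$ at two carefully chosen scales, and whose edges record $d$-proximity; when the scales are tuned properly, a one-sided non-chaining condition will force every component to be a ``star'' with a single $Q_2$-piece, which makes the diameter easy to control, while non-edges between components automatically give the desired separation.

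In detail, let $\rho_i\colon U_i\times U_i\to[0,\infty)$ be the pullback to $U_i$ of the ultrametric in which $U_i$ embeds with distortion $D_i$, so $d\le\rho_i\le D_id$ on $U_i$. Set $\Delta\eqdef\diam_d(U_1\cup U_2)$, $M\eqdef D_1D_2+2D_1+2D_2+2$, $R\eqdef\Delta/(M+\e)$, and choose scales $s_1\eqdef D_1R$ and $s_2\eqdef D_2(D_1+2)R+\eta$ with $\eta\eqdef\Delta\e(M-2D_2)/(M(M+\e))$, which is strictly positive since $M-2D_2=D_1D_2+2D_1+2>0$. Let $Q_i$ be the partition of $U_i$ into $\rho_i$-balls of radius $s_i$; by the ultrametric property, each $Q_i$-piece has $d$-diameter at most $s_i$ and any two distinct $Q_i$-pieces are at $d$-distance at least $s_i/D_i$.

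Form the graph $G$ on $Q_1\sqcup Q_2$ whose edges are the pairs $(C,D)\in Q_1\times Q_2$ with $d(C,D)\le R$ (in particular whenever $C\cap D\ne\emptyset$). Any $u\in U_1\cap U_2$ lies in a unique $C\in Q_1$ and a unique $D\in Q_2$, and these are connected by an edge, so declaring $u,v\in U_1\cup U_2$ equivalent when they belong to pieces in the same component of $G$ gives a well-defined partition $P$ of $U_1\cup U_2$. The pivotal structural observation is that each component of $G$ contains \emph{at most one} $Q_2$-piece: otherwise some $C\in Q_1$ would have edges to distinct $D_1,D_2\in Q_2$ via witnesses $x_i\in C$, $y_i\in D_i$ with $d(x_i,y_i)\le R$, forcing $d(y_1,y_2)\le d(y_1,x_1)+\diam_d(C)+d(x_2,y_2)\le 2R+s_1$ in contradiction with the $Q_2$-separation $d(y_1,y_2)>s_2/D_2=(D_1+2)R+\eta/D_2>2R+s_1$.

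Hence each component is either a singleton $Q_1$-piece, a singleton $Q_2$-piece, or a star $D\cup\bigcup_{i=1}^k C_i$ with a unique $Q_2$-piece $D$ and $Q_1$-pieces $C_i$ each at $d$-distance $\le R$ from $D$. For a star, routing any two points through witnesses in $D$ yields $d$-diameter at most $2s_1+s_2+2R=MR+\eta$, and a direct computation from the definitions of $R$, $M$, and $\eta$ shows $MR+\eta=(1-\delta)\Delta$; this is exactly \eqref{eq:diam bound union}. For \eqref{eq:sep union}, two points in distinct blocks of $P$ lie in pieces $P_x,P_y$ in different components of $G$, whence $d(P_x,P_y)\ge R$ holds because either $P_x,P_y$ are both in $Q_1$ (so $d(P_x,P_y)\ge s_1/D_1=R$), both in $Q_2$ (so $d(P_x,P_y)\ge s_2/D_2>R$), or of opposite types (in which case the non-edge condition directly gives $d(P_x,P_y)>R$). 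The main obstacle, and the reason the final formula for $\delta$ is asymmetric in $D_1$ and $D_2$, is that the non-chaining bound $s_2/D_2>2R+s_1$ needed for the star structure is inherently incompatible with its symmetric counterpart $s_1/D_1>2R+s_2$, so one is forced into a one-sided construction and the particular $\eta$ above is precisely the slack left after matching both constraints.
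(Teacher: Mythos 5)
Your proof is correct and follows essentially the same two-scale/star-structure strategy as the paper. The only substantive difference is that the roles of $U_1$ and $U_2$ are dualized: you take the coarser "hub" pieces from $U_2$ (scale $s_2$) and the finer pieces from $U_1$ (scale $s_1$), whereas the paper partitions $U_1$ at the coarser scale $a=\tfrac{D_1D_2+2D_1}{M}$ and $U_2$ at the finer scale $b=\tfrac{D_2}{M+\e}$, then lets each $E_i$ absorb nearby $F_j$'s via the sets $J_i$; your bipartite graph $G$ and its components are just a repackaging of that device. Remarkably both choices of scales give the same $\delta$; in fact your computation shows you did not actually need the slack $\eta>0$ -- since separation between distinct $\rho_2$-balls is strict, $\eta=0$ already rules out two $Q_2$-pieces in one component and gives the slightly stronger diameter bound $\diam_d(C)\le \frac{M}{M+\e}\Delta$, which dominates $(1-\delta)\Delta$ because $M>2D_2$. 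Inserting $\eta$ to match the stated $\delta$ exactly is harmless but unnecessary.
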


\begin{proof} By rescaling we may assume that $\diam_d(U_1\cup U_2)=1$. Let
$\rho_1$ be an ultrametric on $U_1$ satisfying $d(x,y)\le
\rho_1(x,y)\le D_1d(x,y)$ for all $x,y\in U_1$. Define
\begin{equation}\label{eq:def a}
a\eqdef\frac{D_1D_2+2D_1}{D_1D_2+2D_1+2D_2+2},
\end{equation}
and consider the equivalence relation on $U_1$ given by $x\sim_1 y
\iff \rho_1(x,y)\le a$ (this is an equivalence relation since
$\rho_1$ is an ultrametric). Let $\{E_i\}_{i\in I}\subseteq 2^{U_1}$ be
the corresponding equivalence classes. Thus
\begin{equation}\label{eq:diam E_i}
\diam_d(E_i)\le \diam_{\rho_1}(E_i)\le a \end{equation}
for all
$i\in I$, and for distinct $i,j\in I$ we
have
\begin{equation}\label{eq:sep E_i}
d(E_i,E_j)\ge \frac{\rho_1(E_i,E_j)}{D_1}\ge \frac{a}{D_1}.
\end{equation}

Let $\rho_2$ be an ultrametric on $U_2$ satisfying $d(x,y)\le
\rho_2(x,y)\le D_2d(x,y)$ for all $x,y\in U_2$. Define
\begin{equation}\label{eq:def b}
b\eqdef\frac{D_2}{D_1D_2+2D_1+2D_2+2+\e},
\end{equation}
and consider similarly the equivalence relation on $U_2$ given by
$x\sim_2 y \iff \rho_2(x,y)\le b$. The corresponding equivalence
classes will be denoted $\{F_j\}_{j\in J}\subseteq 2^{U_2}$. Thus
\begin{equation}\label{eq:diam F_i}
\diam_d(F_i)\le \diam_{\rho_1}(F_i)\le b
\end{equation}
for all
$i\in J$, and for distinct $i,j\in J$ we
have
\begin{equation}\label{eq:sep F_i}
d(F_i,F_j)\ge \frac{\rho_2(F_i,F_j)}{D_2}\ge \frac{b}{D_2}.
\end{equation}

For every $i\in I$ denote
\begin{equation}\label{eq:def J_i}
J_i\eqdef\left\{j\in J:\ d(E_i,F_j)\le c\right\},
\end{equation} where
\begin{equation}\label{eq:def c}
c\eqdef\frac{1}{D_1D_2+2D_1+2D_2+2}.
\end{equation} Note that for every $j\in J$
there is at most one $i\in I$ for which $j\in J_i$.
Indeed, if $j\in J_i\cap J_\ell$, where $i\neq \ell$, then
\begin{eqnarray*}
d(E_i,E_\ell)&\le&
d(E_i,F_j)+\diam_d(F_j)+d(F_j,E_\ell)\\&\stackrel{\eqref{eq:diam
F_i}\wedge\eqref{eq:def J_i}}{\le}& 2c+b\\&\stackrel{\eqref{eq:def
b}\wedge\eqref{eq:def
c}}{=}&\frac{2}{D_1D_2+2D_1+2D_2+2}+\frac{D_2}{D_1D_2+2D_1+2D_2+2+\e}\\&<&
\frac{2+D_2}{D_1D_2+2D_1+2D_2+2}\stackrel{\eqref{eq:def a}}{=}
\frac{a}{D_1},
\end{eqnarray*}
Contradicting~\eqref{eq:sep E_i}.

Consider the partition $P$ of $U_1\cup U_2$ consisting of
the sets
$$
\left\{E_i\cup\left(\bigcup_{j\in J_i}
F_j\right)\right\}_{i\in I}\quad \mathrm{and}\quad \{F_j\setminus
U_1\}_{j\in J\setminus \left(\bigcup_{i\in I}
J_i\right)}.
$$
It follows from~\eqref{eq:sep E_i}, \eqref{eq:sep F_i}
and~\eqref{eq:def J_i} that for every distinct $C_1,C_2\in P$,
$$
d(C_1,C_2)\ge
\min\left\{\frac{a}{D_1},\frac{b}{D_2},c\right\}=\frac{1}{D_1D_2+2D_1+2D_2+2+\e}.
$$
Thus the partition $P$ satisfies~\eqref{eq:sep union}. It also
follows from~\eqref{eq:diam E_i}, \eqref{eq:diam F_i}
and~\eqref{eq:def J_i} that for every $C\in P$,
$$
\diam_d(C)\le a+2b+2c=(1-\delta),
$$
where we used the definitions~\eqref{eq:def delta}, \eqref{eq:def
a}, \eqref{eq:def b}, \eqref{eq:def c}. Thus the partition $P$
satisfies~\eqref{eq:diam bound union}, completing the proof of
Lemma~\ref{lem:union frag}.
\end{proof}


\begin{proof}[Proof of Theorem~\ref{thm:UMunion}] Assume first that
$U_1,U_2$ are bounded. Define a sequence $\{P_k\}_{k=0}^\infty$ of
partitions of $U_1\cup U_2$ as follows. Start with the trivial
partition $P_0=\{U_1\cup U_2\}$, and having defined $P_k$, the
partition $P_{k+1}$ is obtained by applying Lemma~\ref{lem:union
frag} to the sets $U_1\cap C$ and $U_2\cap C$ for each $C\in P_k$.
Then the partitions $\{P_k\}_{k=0}^\infty$ have the following
properties.
\begin{itemize}
\item $P_{k+1}$ is a refinement of $P_k$,
\item for every $C\in P_k$ we have
\begin{equation}\label{eq:diam vanishes}
\diam_d(C)\le (1-\delta)^k\diam_d(U_1\cup U_2), \end{equation}
\item for every distinct $C_1,C_2\in P_{k+1}$ such that $C_1,C_2\subseteq
C$ for some $C\in P_k$, we have
\begin{equation}\label{eq:iterated sep}
d(C_1,C_2)\ge \frac{\diam_d(C)}{D_1D_2+2D_1+2D_2+2+\e}.
\end{equation}
\end{itemize}

It follows from~\eqref{eq:diam vanishes} that for every distinct
$x,y\in U_1\cup U_2$ we have $P_k(x)\neq P_k(y)$ for $k\ge 0$
large enough. Thus for distinct $x,y\in U_1\cup U_2$ let $k(x,y)$
denote the largest integer $k\ge 0$ such that $P_k(x)=P_k(y)$.
Define
\begin{equation*}\label{eq:def rho}
\rho(x,y)=\left\{\begin{array}{ll}\diam_d\left(P_{k(x,y)}(x)\right)
& x\neq y,\\0 & x=y.\end{array}\right.
\end{equation*}
Then $\rho$ is an ultrametric on $U_1\cup U_2$. Indeed, for distinct
$x,y,z\in U_1\cup U_2$ let $k\ge 0$ be the largest integer such that
$P_k(x)=P_k(y)=P_k(z)$. Then $k=\min\{k(x,z),k(y,z)\}$ and
$P_k(x)\supseteq P_{k(x,y)}(x)$, implying that
$
\rho(x,y)=\diam_d\left(P_{k(x,y)}(x)\right)\le
\diam_d\left(P_k(x)\right)=\max\{\rho(x,z),\rho(y,z)\}.
$
For distinct $x,y\in U_1\cup U_2$, since $x,y\in P_{k(x,y)}(x)$, we have
$
\rho(x,y)=\diam_d\left(P_{k(x,y)}(x)\right)\ge d(x,y),
$
while since $P_{k(x,y)+1}(x)\neq P_{k(x,y)+1}(y)$, we deduce
from~\eqref{eq:iterated sep} that
\begin{multline*}
d(x,y)\ge d\left(P_{k(x,y)+1}(x),P_{k(x,y)+1}(y)\right)\\\ge
\frac{\diam_d\left(P_{k(x,y)}(x)\right)}{D_1D_2+2D_1+2D_2+2+\e}=\frac{\rho(x,y)}{D_1D_2+2D_1+2D_2+2+\e}.
\end{multline*}
The above argument shows that if $U_1,U_2$
are bounded, the metric space $(U_1\cup U_2,d)$ embeds with distortion $D_1D_2+2D_1+2D_2+2+\e$ into an ultrametric space for every $\e\in(0,1)$.

For possibly unbounded $U_1,U_2\subseteq X$, fix $x_0\in X$, and for
every $n\in \N$ let $\rho_n$ be an ultrametric on $\left(U_1\cap
B_d(x_0,n)\right)\cup\left(U_2\cap B_d(x_0,n)\right)$ satisfying
$$d(x,y)\le \rho_n(x,y)\le \left(D_1D_2+2D_1+2D_2+2+\frac{1}{n}\right)d(x,y)$$ for all
$x,y\in \left(U_1\cap B_d(x_0,n)\right)\cup\left(U_2\cap
B_d(x_0,n)\right)$. Define also $\rho_n(x,y)=0$ if $\{x,y\}$ is not
contained in $\left(U_1\cap B_d(x_0,n)\right)\cup\left(U_2\cap
B_d(x_0,n)\right)$. Let $\U$ be a free ultrafilter on $\N$ and set
$$
\rho_\infty(x,y)\eqdef \lim_{n\to \U} \rho_n(x,y).
$$
Then $\rho_\infty$ is an ultrametric on $U_1\cup U_2$ satisfying
$d\le \rho_\infty\le (D_1D_2+2D_1+2D_2+2)d$.
\end{proof}

\begin{remark}\label{rem:euclidean question}
There are several interesting variants of the problem studied in Theorem~\ref{thm:UMunion}. For example, answering our initial question, Konstantin Makarychev and Yury Makarychev proved (private communication) that if $(X,d)$ is a metric space and $E_1,E_2\subseteq X$ embed into Hilbert space with distortion $D\ge 1$ then $(E_1\cup E_2,d)$ embeds into Hilbert space with distortion $f(D)$. Their argument crucially uses Hilbert space geometry, and therefore the following natural question remains open: if $(X,d)$ is a metric space and $E_1,E_2\subseteq X$ embed into $L_1$ with distortion $D\ge 1$, does it follow that $(E_1\cup E_2,d)$ embeds into $L_1$ with distortion $f(D)$? Regarding unions of more than two subsets, perhaps even the following (ambitious) question has a positive answer: if $E_1,\ldots,E_n\subseteq X$ embed into Hilbert space with distortion $D\ge 1$, does it follow that $(E_1\cup\ldots \cup E_n,d)$ embeds into Hilbert space with distortion $O(\log n)f(D)$? If true, this statement (in the isometric case $D=1$) would yield a very interesting strengthening of Bourgain's embedding theorem~\cite{Bourgain-embed}, which asserts that any $n$-point metric space embeds into Hilbert space with distortion $O(\log n)$.
\end{remark}

\begin{remark}\label{rem:D1D2 sharp}
The following example shows that Theorem~\ref{thm:UMunion} is sharp up to lower order terms. Fix two integers $ M,N\ge 2$, and write $MN=K(M+N)+L$, where $K\in \N$ and $L\in \{0,1,\ldots,M+N-1\}$. Consider the following two subsets of the real line:
\begin{align*}
U_1&\eqdef\bigcup_{i=0}^{K-1}\left\{i(M+N),i(M+N)+1,\ldots, i(M+N)+M-1\right\}, \\
U_2&\eqdef\bigcup_{i=0}^{K-1} \left\{i(M+N)+M, i(M+N)+M+1,\ldots, (i+1)(M+N)-1\right\} .
\end{align*}
For $i\in \{1,2\}$, let $D_i\ge 1$ be the best possible distortion of $U_i$ (with the metric inherited from $\mathbb R$) in an ultrametric space. For $i_1,i_2\in \{0,\ldots,K-1\}$ and $j_1,j_2\in \{0,\ldots,M-1\}$ define
\begin{equation*}
\rho_1\left(i_1(M+N)+j_1,i_2(M+N)+j_2\right)\eqdef \left\{\begin{array}{ll}0 & i_1=i_2\ \wedge\ j_1=j_2,\\ M-1 & i_1=i_2\ \wedge\ j_1\neq j_2,\\
(K-1)(M+N)+M-1&i_1\neq i_2.\end{array}\right.
\end{equation*}
Then $\rho_1$ is an ultrametric on $U_1$ satisfying $|x-y|\le \rho_1(x,y)\le (M-1)|x-y|$
for all $x,y\in U_1$. Hence $D_1\le M-1$.
Similarly, for $i_1,i_2\in \{0,\ldots,K-1\}$ and $j_1,j_2\in \{M,\ldots,M+N-1\}$ define
\begin{equation*}
\rho_2\left(i_1(M+N)+j_1,i_2(M+N)+j_2\right)\eqdef \left\{\begin{array}{ll}0 & i_1=i_2\ \wedge\ j_1=j_2,\\ N-1 & i_1=i_2\ \wedge\ j_1\neq j_2,\\
(K-1)(M+N)+N-1&i_1\neq i_2.\end{array}\right.
\end{equation*}
Then $\rho_2$ is an ultrametric on $U_2$ satisfying $|x-y|\le \rho_1(x,y)\le (N-1)|x-y|$
for all $x,y\in U_2$. Hence $D_2\le N-1$.
But $U_1\cup U_2=\{0,1,\ldots, K(M+N)-1\}$, and hence any embedding of $U_1\cup U_2$ into an ultrametric space incurs distortion at least $K(M+N)-1$ (see for example~\cite[Lem.~2.4]{MN04}). Observe that this lower bound on the distortion equals $MN-L-1\ge (M-1)(N-1)-1\ge D_1D_2-1$.
When $L=0$ (e.g., when $M=N=2S$ or $M=2N=6S$ for some $S\in\mathbb N$), the above distortion lower bound becomes
$MN-L-1\ge (D_1+1)(D_2+1)-1$. Thus one cannot improve
the bound in Theorem~\ref{thm:UMunion} to $D_1D_2$, i.e.,
additional lower order terms are necessary.
\end{remark}

%

\subsection*{Acknowledgements.} We are grateful to the following people for helpful  suggestions: Keith Ball, Witold Bednorz, Rafa{\l} Lata{\l}a, Gilles Pisier, Gideon Schechtman, Michel Talagrand.



\bibliographystyle{abbrv}
\bibliography{hausdorff}

\end{document}